\date{}
\renewcommand{\uppercasenonmath}[1]{}
\numberwithin{equation}{section} \theoremstyle{plain}
\theoremstyle{plain}
\theoremstyle{plain}
\newtheorem{theorem}{Theorem}[section]
\newtheorem{proposition}[theorem]{Proposition}
\newtheorem{lemma}[theorem]{Lemma}
\newtheorem*{open question}{Open Question}
\theoremstyle{definition}
\newtheorem*{acknowledgement}{Acknowledgement}
\theoremstyle{remark}
\newtheorem{remark}[theorem]{Remark}
\newcommand{\Tor}{\mbox{\rm Tor}}
\newcommand{\Q}{\mathcal{Q}}
\newcommand{\Id}{\mathrm{Id}}
\def\pd{{\rm pd}}
\def\Reg{{\rm Reg}}
\def\tor{{\rm tor}}
\def\Hom{{\rm Hom}}
\def\Ext{{\rm Ext}}
\def\Tor{{\rm Tor}}
\def\Ker{{\rm Ker}}
\def\Reg{{\rm Reg}}
\def\T{{\rm T}}
\def\E{{\rm E}}
\newcommand{\p}{\frak{p}}
\def\Min{{\rm Min}}
\begin{document}
\begin{center}
{\large  \bf Characterizing Baer Modules Over $\tau_q$-Semisimple Rings: An Extension of Baer's Problem}

\vspace{0.5cm}
Xiaolei Zhang\\

E-mail: zxlrghj@163.com\\

\vspace{0.5cm}
Hwankoo Kim\footnote{Corresponding author}\\

E-mail:  hkkim@hoseo.edu\\
\end{center}

%\begin{figure}[b]
%\rule[-2.5truemm]{5cm}{0.1truemm}\\[2mm]
%{\small }
%\end{figure}

%\begin{figure}[b]
%\rule[-2.5truemm]{5cm}{0.1truemm}\\[2mm]
%{\small }
%\end{figure}
\bigskip
\centerline { \bf  Abstract}
\bigskip
\leftskip10truemm \rightskip10truemm \noindent
In this note, we investigate the Baer splitting problem over commutative rings. In particular, we show that if a commutative ring $R$ is $\tau_q$-semisimple, then every Baer $R$-module is projective.
\\
\vbox to 0.3cm{}\\
{\it Key Words:} Baer splitting problem; Baer module; $\tau_q$-semisimple ring.\\
{\it 2020 Mathematics Subject Classification:} 13C10; 13C12; 16D60.

\leftskip0truemm \rightskip0truemm
\bigskip
\section{Introduction}

All rings in this note are assumed to be commutative with identity, and all modules are unitary.

In early 1936, Baer\cite{B36} posed the following classical question: How to characterize the class of all abelian groups $G$ such that every extension of $G$ with a torsion group splits, i.e., to characterize abelian groups $G$ satisfying $\Ext_{\mathbb{Z}}^1(G, T)=0$ for any torsion groups $T$ in modern language \cite{HBH08}. Baer\cite{B36} himself proved that every countably generated group $G$ with this property must be free \cite{HBH08}. In 1962, Kaplansky\cite{K62} extended this question to commutative domains and called the modules satisfying this property Baer modules. Let $D$ be a commutative domain. A $D$-module $P$ is said to be a Baer module if $\Ext_D^1(P,T)=0$ for any torsion $D$-module $T$. Kaplansky\cite{K62} conjectured that every Baer module over a commutative domain is projective. In 2008, Angeleri H\"ugel, Bazzoni, and Herbera proved this conjecture by translating the vanishing of Ext into a Mittag-Leffler condition on inverse systems \cite{HBH08}.

A natural question is how to extend this conjecture to commutative rings with zero-divisors. In fact, Golan \cite[Chapter 11]{G86}, in 1986, introduced the notion of $\sigma$-projective modules over general rings, where $\sigma$ is a given torsion theory. An $R$-module $P$ is said to be $\sigma$-projective if $\Ext_R^1(P,T)=0$ for any $\sigma$-torsion module $T$. So it is interesting to characterize rings over which all $\sigma$-projective modules are projective.

In this note, we consider a classical torsion theory over commutative rings: An $R$-module $T$ is said to be a torsion module if, for any $t\in T$, there exists a non-zero-divisor $r\in R$ such that $rt=0$. Thus, torsion modules over commutative rings are natural generalizations of those over commutative domains. We denote this torsion theory by $reg$ and call the notion of $reg$-projective module in the sense of Golan to be a Baer module again to continue the classical situation. Recently, Zhang \cite{z-q-proj-sim} introduced the notion of a $\tau_q$-semisimple ring $R$, which is equivalent to its total ring $\T(R)$ being a semisimple ring (see Section 2 for more details). In this note, we prove that if $R$ is a $\tau_q$-semisimple ring, then every Baer $R$-module is projective.

Any undefined notation or terminology can be found in \cite{WK24}.

\section{Preliminaries on $\tau_q$-semisimple rings}
Recall some notions about (strongly) Lucas modules, $\tau_q$-projective modules, and $\tau_q$-semisimple rings (see \cite{z-q-proj-sim, wzcc20,  ZDC20, ZQ23}). An ideal $I$ of a ring $R$ is said to be \emph{dense} if $(0:_R I) := \{r \in R \mid Ir = 0\} = 0$; \emph{regular} if $I$ contains a non-zero-divisor; and \emph{semi-regular} if there exists a finitely generated dense sub-ideal of $I$. Denote by $\Reg(R)$ the set of all regular elements of $R$. The set of all finitely generated semi-regular ideals of $R$ is denoted by $\Q$. Let $M$ be an $R$-module. Denote by
$$\tor_{\Q}(M):=\{m\in M \mid Im=0 ~\mbox{for some}~ I\in \Q \}.$$
An $R$-module $M$ is said to be \emph{$\Q$-torsion} (resp., \emph{$\Q$-torsion-free}) if $\tor_{\Q}(M) = M$ (resp., $\tor_{\Q}(M) = 0$). A $\Q$-torsion-free module $M$ is called a \emph{(strongly) Lucas module} if $\Ext_R^1(R/I, M) = 0$ for every $I \in \Q$ (and every $n \geq 1$). An $R$-module $M$ is said to be \emph{$\tau_q$-projective} if $\Ext_R^1(M, N) = 0$ for every strongly Lucas module $N$. A ring $R$ is said to be \emph{$\tau_q$-semisimple} if every $R$-module is $\tau_q$-projective. Trivially, integral domains and semisimple rings are $\tau_q$-semisimple rings.

Recall from \cite{fl11} that an $R$-module $M$ is said to be \emph{reg-injective} if $\Ext_R^1(R/I, M) = 0$ for every regular ideal $I$ of $R$; and $M$ is said to be \emph{semireg-injective} if $\Ext_R^1(R/I, M) = 0$ for every semi-regular ideal $I$ of $R$. Trivially injective modules are semireg-injective, and semireg-injective modules are reg-injective. Then the following result shows when these three notions coincide.

\begin{lemma}\label{0-d}\cite[Theorem 4.3]{z-q-proj-sim}
The following statements are equivalent for a ring $R$.
\begin{enumerate}
\item $R$ is a $\tau_q$-semisimple ring.
\item $\T(R[x])$ is a semisimple ring.
\item $R[x]$ is a reduced ring with finite minimal prime ideals.
\item $\T(R)$ is a semisimple ring.
\item $R$ is a reduced ring with finite minimal prime ideals.
\item Every reg-injective module is injective.
\item Every semireg-injective module is injective.
\item Every Lucas module is injective.
\item Every strongly Lucas module is injective.
\end{enumerate}
\end{lemma}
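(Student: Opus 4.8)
The nine statements fall into three layers: the commutative-algebra conditions (2)--(5), which describe $R$ and $R[x]$ via their total rings of fractions; the relative-injectivity conditions (6)--(9); and the defining condition (1). The plan is to verify the equivalences inside the first layer by classical means, to treat $(1)\Leftrightarrow(9)$ and the inclusions among (6)--(9) by formal homological algebra, and then to bridge the two worlds through the localization $\T(R)=S^{-1}R$ with $S=\Reg(R)$. Concretely I would run the cycle $(5)\Rightarrow(6)\Rightarrow(7)\Rightarrow(8)\Rightarrow(9)\Rightarrow(5)$, attach (1) through $(1)\Leftrightarrow(9)$, and attach (2),(3),(4) through the first layer.

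\textbf{The commutative-algebra layer.} I would first prove $(4)\Leftrightarrow(5)$. When $R$ is reduced one has $\Z(R)=\bigcup_{\p\in\Min(R)}\p$, so $S=\Reg(R)$ consists exactly of the elements lying in no minimal prime; hence the primes of $\T(R)=S^{-1}R$ are precisely the $\p\T(R)$ with $\p\in\Min(R)$, and each is simultaneously minimal and maximal. Thus $\T(R)$ is zero-dimensional and reduced, and it is semisimple iff $\Min(R)$ is finite, in which case $\T(R)\cong\prod_{i=1}^{n}R_{\p_i}$ is a finite product of fields; conversely a semisimple $\T(R)=\prod_{i=1}^{n}K_i$ is reduced with exactly $n$ minimal primes, and reducedness and the count of minimal primes descend to $R$. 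Statement $(2)\Leftrightarrow(3)$ is the same assertion for $R[x]$, and $(3)\Leftrightarrow(5)$ follows from the standard facts that $R[x]$ is reduced iff $R$ is reduced and that $\Min(R[x])=\{\p[x]:\p\in\Min(R)\}$, so $R$ and $R[x]$ have equally many minimal primes. Together these give $(2)\Leftrightarrow(3)\Leftrightarrow(4)\Leftrightarrow(5)$.

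\textbf{The formal homological layer.} The equivalence $(1)\Leftrightarrow(9)$ is immediate from the definition: ``every module is $\tau_q$-projective'' reads $\Ext_R^1(M,N)=0$ for all $M$ and all strongly Lucas $N$, which by Baer's criterion says precisely that every strongly Lucas $N$ is injective. Two of the inclusions among (6)--(9) are formal. Since a non-zero-divisor generates a finitely generated dense ideal, every regular ideal is semi-regular, so a semireg-injective module is reg-injective; hence the reg-injective class contains the semireg-injective one and $(6)\Rightarrow(7)$. As strongly Lucas modules form a subclass of Lucas modules, $(8)\Rightarrow(9)$. The step $(7)\Rightarrow(8)$ I would obtain from the observation that \emph{every Lucas module is semireg-injective}: if $J$ is semi-regular it contains a finitely generated dense $I_0\in\Q$, every finitely generated $I$ with $I_0\subseteq I\subseteq J$ again lies in $\Q$, so each homomorphism $J\to M$ restricts to a map that extends over $R$, uniquely because $\tor_{\Q}(M)=0$; these extensions agree and $J$ is the directed union of such $I$, whence the map extends and $\Ext_R^1(R/J,M)=0$. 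Thus Lucas $\subseteq$ semireg-injective, and $(7)\Rightarrow(8)$.

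\textbf{The structural bridge (the main obstacle).} What remains are the two implications that convert the ring statement ``$\T(R)$ is semisimple'' into the collapse of relative injectivity and back, namely $(5)\Rightarrow(6)$ and $(9)\Rightarrow(5)$; this is where the genuine work lies. The unifying remark is that under (5) a dense ideal cannot lie in any minimal prime---if $I\subseteq\p_i$ then $I$ is annihilated by the nonzero ideal $\q_i=\bigcap_{j\neq i}\p_j$---so dense, semi-regular and regular ideals coincide and the torsion theories $reg$ and $\tau_q$ agree, which already aligns (6),(7) with (8),(9). For $(5)\Rightarrow(6)$ I would test a reg-injective $M$ by Baer's criterion: maps out of regular ideals extend by hypothesis, while any $J\subseteq\Z(R)$ lies by prime avoidance in some $\p_i$, and the orthogonal idempotent decomposition $\T(R)=\prod_{i=1}^{n}K_i$---equivalently the pairwise orthogonal ideals $\q_i$, whose sum is regular---lets one split off the offending component and reduce to the regular case. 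For $(9)\Rightarrow(5)$ the idea is to use the general torsion-theoretic principle that every module being $\tau_q$-projective forces the quotient category of $\tau_q$ to be semisimple, together with the identification of that quotient category, via the perfect localization $S^{-1}R$, with modules over $\T(R)$, so that the collapse in (9) makes every module over $\T(R)$ injective and hence $\T(R)$ semisimple. The delicate point---and the step I expect to be the real obstacle---is this identification of the strongly Lucas modules with the objects of the localized category and the matching of $R$-injectivity with injectivity over $\T(R)$; by contrast, everything in the first two layers is either classical commutative algebra or routine manipulation of $\Ext$.
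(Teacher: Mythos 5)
First, a structural remark: the paper does not prove this lemma at all --- it is imported verbatim as \cite[Theorem 4.3]{z-q-proj-sim} --- so your attempt can only be judged on its own merits. Most of it does hold up. The layer $(2)\Leftrightarrow(3)\Leftrightarrow(4)\Leftrightarrow(5)$ is correct (for reduced $R$ one has $\Z(R)=\bigcup_{\p\in\Min(R)}\p$, whence $\T(R)\cong R_{\p_1}\times\cdots\times R_{\p_n}$, and $\Min(R[x])=\{\p[x]:\p\in\Min(R)\}$); the equivalence $(1)\Leftrightarrow(9)$ is indeed definitional; the implications $(6)\Rightarrow(7)\Rightarrow(8)\Rightarrow(9)$ are correct, and your argument that every Lucas module is semireg-injective (unique extensions from finitely generated members of $\Q$ over a directed union, uniqueness coming from $\Q$-torsion-freeness) is exactly right. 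Your sketch of $(5)\Rightarrow(6)$ is also completable: for a non-regular ideal $J\subseteq\Z(R)$, set $A=\{i:J\subseteq\p_i\}\neq\emptyset$, pick $s\in\bigcap_{j\notin A}\p_j\setminus\bigcup_{i\in A}\p_i$; then $J\cap Rs=0$, $J+Rs$ is regular, and any map $J\to M$ extends by zero on $Rs$ and then to $R$ by reg-injectivity, so Baer's criterion applies.

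The genuine gap is the closing implication $(9)\Rightarrow(5)$, and your proposed mechanism for it is not merely incomplete but rests on a false identification. You want to identify the quotient category of $\tau_q$ with the category of $\T(R)$-modules ``via the perfect localization $S^{-1}R$''. But $\tau_q$ is the torsion theory of the Gabriel topology of semi-regular ideals; its ring of quotients is the ring of finite fractions $Q_0(R)$ of \cite{wzcc20}, which contains $\T(R)$ properly in general, and this localization need not be perfect. The flat epimorphism $R\to\T(R)=S^{-1}R$ is the perfect localization of the \emph{classical} torsion theory $reg$ (torsion $=$ killed by a regular element), which is strictly smaller than $\tau_q$ whenever there exists a finitely generated semi-regular ideal $I$ consisting of zero-divisors. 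For such an $I$ one has $I\T(R)\neq\T(R)$ (an equation $1=\sum a_it_i$ with $a_i\in I$, $t_i\in\T(R)$ would put a regular element in $I$), so $\T(R)/I\T(R)$ is a nonzero $\T(R)$-module that is $\Q$-torsion over $R$: thus $\T(R)$-modules are not objects of the $\tau_q$-quotient category, and conversely strongly Lucas modules need not be $\T(R)$-modules. The two torsion theories coincide exactly when semi-regular and regular ideals coincide --- which is a consequence of the target statement $(5)$ and is unavailable when proving $(9)\Rightarrow(5)$; assuming it makes the argument circular. (There is a second, smaller mismatch: $(9)$ concerns strongly Lucas modules, while the objects of the quotient category are the Lucas modules, and you have no argument that these classes agree.) This is precisely why conditions $(2)$ and $(3)$ about $R[x]$ appear in the statement: by McCoy's theorem a finitely generated ideal $(a_0,\dots,a_n)$ of $R$ is dense if and only if $a_0+a_1x+\cdots+a_nx^n$ is a non-zero-divisor of $R[x]$, so the $\tau_q$-theory of $R$ is governed by the classical torsion theory of $R[x]$ and by $\T(R[x])$, not by $\T(R)$; a correct proof of the bridge back from $(9)$ must pass through this (or an equivalent) device, which your plan never engages.
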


From Lemma \ref{0-d}, we see that reduced Noetherian rings are $\tau_q$-semisimple. Moreover, the polynomial, formal power series, and amalgamation transfer of $\tau_q$-semisimplicity can be found in \cite[Section 4]{z-q-proj-sim}.

It is well known that the total ring $\T(R)$ of quotients of an integral domain is both flat and injective. The following result shows that this is also true for $\tau_q$-semisimple rings.

\begin{lemma}\label{fiTR}
Let $R$ be a $\tau_q$-semisimple ring. Then $\T(R)$ is both flat and injective as an $R$-module.
\end{lemma}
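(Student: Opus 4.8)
The flatness of $\T(R)$ is the easy half and needs nothing beyond the definition: $\T(R)=\Reg(R)^{-1}R$ is a localization of $R$, and localizations are always flat. So the real content is the injectivity of $\T(R)$ as an $R$-module. My plan is to show that $\T(R)$ is a Lucas module and then invoke Lemma \ref{0-d}: because $R$ is $\tau_q$-semisimple, statement (8) says that every Lucas module is injective, so it suffices to place $\T(R)$ in that class. A Lucas module is a $\Q$-torsion-free module $M$ with $\Ext_R^1(R/I,M)=0$ for all $I\in\Q$, so two things must be verified.

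First I would check $\tor_{\Q}(\T(R))=0$. If $m=a/s\in\T(R)$ with $s\in\Reg(R)$ is annihilated by some $I\in\Q$, then for each generator $x$ of $I$ we have $xa/s=0$ in $\T(R)$; since the relevant denominators are regular, this forces $xa=0$ in $R$, so $Ia=0$ and $a\in(0:_RI)$. Every $I\in\Q$ is dense (a finitely generated dense sub-ideal has zero annihilator, and this passes up to $I$), so $(0:_RI)=0$, whence $a=0$ and $m=0$.

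The heart of the proof is the vanishing $\Ext_R^1(R/I,\T(R))=0$ for every $I\in\Q$. Since $I$ is finitely generated, $R/I$ is finitely presented, so both $\Hom$ and $\Ext^1$ commute with the flat localization at $S:=\Reg(R)$. As $\T(R)$ is already a $\T(R)$-module, localizing at $S$ leaves $\Ext_R^1(R/I,\T(R))$ unchanged, and therefore
\[
\Ext_R^1(R/I,\T(R))\ \cong\ \Ext_{\T(R)}^1\bigl(\T(R)/I\T(R),\,\T(R)\bigr).
\]
Everything thus reduces to proving $I\T(R)=\T(R)$, for then $\T(R)/I\T(R)=0$ and the right-hand side vanishes.

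The equality $I\T(R)=\T(R)$ — that a semi-regular (hence dense) ideal becomes the unit ideal in the total quotient ring — is the step I expect to be the main obstacle. I would establish it as follows. Applying the same finite-presentation/localization identity in degree $0$ gives $(0:_{\T(R)}I\T(R))\cong S^{-1}(0:_RI)=0$, so $I\T(R)$ has zero annihilator in $\T(R)$. By Lemma \ref{0-d}, $\T(R)$ is semisimple, hence a finite product of fields $\prod_{i=1}^{n}F_i$, in which every ideal is a subproduct and the only ideal with zero annihilator is the whole ring. Consequently $I\T(R)=\T(R)$, which completes the vanishing of $\Ext_R^1(R/I,\T(R))$ and shows that $\T(R)$ is a Lucas module; Lemma \ref{0-d}(8) then upgrades this to injectivity. (The same reduction in fact kills $\Ext_R^n(R/I,\T(R))$ in every degree for which $\Ext$ localizes, so $\T(R)$ is even strongly Lucas, and one could appeal to Lemma \ref{0-d}(9) instead.)
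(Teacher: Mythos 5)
Your route is genuinely different from the paper's, and most of it is sound, but it hinges on one unjustified step. For comparison: the paper never goes through Lucas modules. It shows directly that $\T(R)$ is \emph{reg}-injective --- if $I$ is a regular ideal containing a non-zero-divisor $r$, then multiplication by $r$ is an isomorphism on $\T(R)$ and zero on $R/I$; since the two actions of $r$ on $\Ext_R^1(R/I,\T(R))$ coincide, this group is annihilated by one of its own automorphisms and hence vanishes --- and then invokes Lemma \ref{0-d}(6) to upgrade reg-injectivity to injectivity. You instead verify the Lucas property (which concerns the finitely generated semi-regular ideals $I\in\Q$ rather than the regular ones) and invoke Lemma \ref{0-d}(8). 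Within your route, the $\Q$-torsion-freeness check is correct, the identification of $\Ext_R^1(R/I,\T(R))$ with its own localization is correct (the two $R$-actions on $\Ext$ agree and $S:=\Reg(R)$ acts invertibly through the second argument), and so is the computation $(0:_{\T(R)}I\T(R))\cong S^{-1}(0:_RI)=0$ together with the deduction $I\T(R)=\T(R)$ from semisimplicity of $\T(R)$: for degree-zero $\Hom$, finite presentation of $R/I$ is exactly what is needed.

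The gap is the claim that $\Ext^1$ commutes with the localization because $R/I$ is finitely presented. That general principle requires more: to get an isomorphism $S^{-1}\Ext_R^1(M,N)\cong\Ext_{S^{-1}R}^1(S^{-1}M,S^{-1}N)$ one needs a projective resolution of $M$ that is finitely generated in degrees $0,1,2$, i.e.\ here one needs the ideal $I$ itself to be finitely presented, not just finitely generated. A $\tau_q$-semisimple ring need not be coherent (every non-coherent integral domain is an example), so this hypothesis is not available; your parenthetical remark about higher $\Ext$ and strongly Lucas modules suffers the same defect in every degree. Fortunately, the conclusion you need survives: writing $0\rightarrow K\rightarrow F\rightarrow R/I\rightarrow 0$ with $F$ finitely generated free and $K$ finitely generated, $\Ext_R^1(R/I,N)$ is the cokernel of $\Hom_R(F,N)\rightarrow\Hom_R(K,N)$; localization is exact, the comparison map for $F$ is bijective, and the comparison map for $K$ is injective (finite generation suffices for injectivity of $S^{-1}\Hom_R(K,N)\rightarrow\Hom_{S^{-1}R}(S^{-1}K,S^{-1}N)$), so a diagram chase gives an \emph{injection} $S^{-1}\Ext_R^1(R/I,N)\hookrightarrow\Ext_{\T(R)}^1(\T(R)/I\T(R),S^{-1}N)$. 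Since you prove the target is zero, the source is zero, and your argument closes. An even cheaper repair: from $I\T(R)=\T(R)$ write $1=\sum x_ic_i$ with $x_i\in I$, $c_i\in\T(R)$, and clear denominators to produce a regular element inside $I$; thus every $I\in\Q$ is in fact a regular ideal over such rings, and the paper's two-line multiplication argument finishes the proof without any $\Ext$-localization at all.
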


\begin{proof}
Obviously, $\T(R)$ is a flat $R$-module. To show that $\T(R)$ is injective, it suffices to show by Theorem \ref{0-d} that $\T(R)$ is reg-injective since $R$ is a $\tau_q$-semisimple ring. Let $I$ be a regular ideal of $R$ with a non-zero-divisor $r$. Let $\xi: 0 \rightarrow \T(R) \rightarrow X \rightarrow R/I \rightarrow 0$ be an extension in $\Ext_R^1(R/I, \T(R))$. Then multiplying $r$ to the left of $\xi$ is an isomorphism, but to the right is zero. Hence $\Ext_R^1(R/I, \T(R)) = 0$ by \cite[Chapter I, Section 5]{FS01}. So $\T(R)$ is reg-injective.
\end{proof}

\begin{lemma}\label{fgnot}
Let $R$ be a $\tau_q$-semisimple ring that does not contain a field as a direct summand. Then every nonzero direct summand of $\T(R)$ is not finitely generated as an $R$-module.
\end{lemma}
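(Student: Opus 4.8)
The plan is to prove the contrapositive: assuming that $\T(R)$ admits a nonzero direct summand $M$ that is finitely generated as an $R$-module, I will exhibit a field that is a direct summand of $R$, contradicting the hypothesis. The whole argument turns on promoting the purely module-theoretic datum ``finitely generated summand of $\T(R)$'' to a genuine ring-theoretic idempotent $e\in R$ for which $eR$ is its own total quotient ring.

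First I would reduce $M$ to an ideal. Since $\T(R)$ is injective by Lemma \ref{fiTR}, every direct summand of it is injective, so $M$ is injective. Writing $S=\Reg(R)$, every element of $S$ is a unit of $\T(R)=S^{-1}R$, so multiplication by any $s\in S$ is an automorphism of $\T(R)$. Choosing a common denominator $s\in S$ for a finite generating set of $M$ gives $sM\subseteq R$, and because multiplication by $s$ is an automorphism of $\T(R)$ the map $M\to sM$ is an isomorphism. Thus $J:=sM$ is a nonzero finitely generated ideal of $R$ with $J\cong M$; in particular $J$ is injective. An injective submodule splits off, so the inclusion $J\hookrightarrow R$ splits, whence $J=eR$ for a nonzero idempotent $e\in R$ and $R=eR\times(1-e)R$. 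Setting $A:=eR\cong M$, the module $A$ is injective over $R$; since the product decomposition of $R$ splits the module category as a product, $A$ is then injective over itself.

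Next I would upgrade self-injectivity of $A$ to the equality $A=\T(A)$. Here is the key trick: as $A$ is injective over $A$, the localization inclusion $A\hookrightarrow\T(A)$ splits by an $A$-linear retraction $\psi$. For a non-zero-divisor $s$ of $A$ we then compute $s\,\psi(1/s)=\psi(s/s)=\psi(1)=1$, so $s$ is a unit of $A$; hence every non-zero-divisor of $A$ is invertible and $A=\T(A)$. Since the total quotient ring of a finite product is the product of the total quotient rings, $\T(A)=e\,\T(R)$ is a direct factor of $\T(R)$, which is semisimple by Lemma \ref{0-d}. A direct factor of a semisimple commutative ring is semisimple, so $A=\T(A)$ is a nonempty finite product of fields. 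Consequently $R=A\times(1-e)R$ displays a field as a direct summand of $R$, which is the desired contradiction; therefore no nonzero finitely generated direct summand of $\T(R)$ can exist.

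The routine parts are the common-denominator reduction and the splitting of an injective submodule off $R$. The main obstacle is the bookkeeping that keeps the idempotent honest: I must make sure injectivity of $A=eR$ over $R$ really transfers to self-injectivity over $A$ (via the product decomposition of the module category), and then that the localization-splitting computation genuinely forces every non-zero-divisor of $A$ to be a unit, so that $A$ collapses to the semisimple ring $e\,\T(R)$. Once these two splittings are in hand, the conclusion that $R$ carries a field direct summand is immediate.
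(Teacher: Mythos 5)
Your proof is correct, and it takes a genuinely different route from the paper's. The paper goes through the explicit structure of $\T(R)$: citing Matlis \cite{M83}, it writes $\T(R)\cong R_{\p_1}\times\cdots\times R_{\p_n}$ with each $R_{\p_i}$ a field, supposes some component $R_{\p_i}$ is finitely generated, notes that it is then finitely generated and flat, hence projective by \cite{PR04}, constructs an epimorphism $R\twoheadrightarrow R_{\p_i}$ by clearing denominators, and splits it by projectivity to exhibit a field summand of $R$. Your argument is the injective dual of this: you use the injectivity of $\T(R)$ (Lemma \ref{fiTR}) rather than flatness, clear denominators to realize the given summand $M$ as a finitely generated injective ideal $J=sM$ of $R$, split the inclusion $J\hookrightarrow R$ by injectivity to get $J=eR$ with $e$ idempotent, and then show that the self-injective factor $A=eR$ equals its own total quotient ring $e\,\T(R)$, which is a direct factor of the semisimple ring $\T(R)$ (Lemma \ref{0-d}) and hence a nonzero product of fields. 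Your route buys two things: it needs no input beyond the paper's own Lemmas \ref{fiTR} and \ref{0-d} (no structure theorem from \cite{M83}, no finitely-generated-flat-implies-projective result from \cite{PR04}); and it treats an arbitrary nonzero summand $M$ directly, whereas the paper's proof silently reduces to the case $M=R_{\p_i}$ --- a reduction that strictly speaking requires observing that any $R$-module summand of $\T(R)$ is divisible and torsion-free, hence a $\T(R)$-submodule containing some $R_{\p_i}$ as a (necessarily finitely generated) summand. What the paper's route buys in exchange is brevity and an explicit identification of the field summand as a localization. The delicate steps in your write-up --- transfer of injectivity from $R$ to $A=eR$ via the product decomposition of the module category, the retraction computation $s\,\psi(1/s)=1$ forcing $A=\T(A)$, and $\T(A\times B)=\T(A)\times\T(B)$ --- are all sound.
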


\begin{proof}
Since $R$ is a $\tau_q$-semisimple ring, we can assume $\Min(R) = \{\p_i \mid i = 1, \dots, n\}$. It follows by \cite[Proposition 1.1, Proposition 1.5]{M83} that $\T(R) \cong R_{\p_1} \times \cdots \times R_{\p_n}$, where each $R_{\p_i}$ is a field. On the contrary, suppose there is some $i$ such that $R_{\p_i}$ is a finitely generated $R$-module, say generated by ${\frac{1}{r_1}, \dots, \frac{1}{r_n}}$, where each $r_j \not\in \p_i$. Since $R_{\p_i}$ is a flat $R$-module, it is also projective by \cite[Lemma 3.1]{PR04}. Set $r := r_1 \cdots r_n \not\in \p_i$. Then there is an epimorphism $R \twoheadrightarrow R_{\p_i}$, which maps $s$ to $\frac{s}{r}$ for any $s \in R$. Note that the epimorphism is splitting, since $R_{\p_i}$ is projective. Thus, $R$ contains a field as a direct summand, which is a contradiction.
\end{proof}

Recall that an $R$-module $M$ is said to be torsion-free if $rm=0$ with $r \in \Reg(R)$ and $m \in M$ implies that $m=0$.

\begin{lemma}\label{fg0}(cf. \cite[Lemma 3.2]{HBH08})
Let $R$ be a $\tau_q$-semisimple ring which does not contain a field as a direct summand. Suppose $M$ is a finitely generated $R$-module. Then $\bigcap\limits_{r \in \Reg(R)} rM = 0$.
\end{lemma}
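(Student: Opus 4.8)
The plan is to put $D:=\bigcap_{r\in\Reg(R)}rM$ and to prove $D=0$, doing the torsion-free case first and then reducing to it. Since $R$ is $\tau_q$-semisimple, the proof of Lemma~\ref{fgnot} gives $\T(R)\cong R_{\p_1}\times\cdots\times R_{\p_n}$ with each $R_{\p_i}$ a field, so $\T(R)$ is semisimple and every regular element of $R$ is invertible in $\T(R)$. As $M$ is finitely generated, $\overline M:=\T(R)\otimes_R M$ is finitely generated over the semisimple ring $\T(R)$, hence a finite direct sum of copies of the $R_{\p_i}$; and since each $R_{\p_i}$ is a direct summand of the injective $R$-module $\T(R)$ (Lemma~\ref{fiTR}), each $R_{\p_i}$, and hence $\overline M$, is an injective $R$-module.

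Assume first that $M$ is torsion-free. For $m\in D$ and regular $r$, let $m'$ be the unique element with $m=rm'$; for any regular $s$ we may write $m=rs\,y$ with $y\in M$, and cancelling $r$ gives $m'=sy\in sM$, so $m'\in\bigcap_s sM=D$. Thus $rD=D$ for all regular $r$, so $D$ is torsion-free and divisible, i.e. a $\T(R)$-submodule of $M$; the canonical map $D\to\overline M$ is then an injective $\T(R)$-homomorphism. Consequently $D$ is finitely generated over the semisimple ring $\T(R)$, hence a finite direct sum $\bigoplus_i R_{\p_i}^{e_i}$, which is injective over $R$. An injective submodule is a direct summand, so $D$ is a direct summand of $M$ and therefore finitely generated over $R$; if $D\neq0$ then some $R_{\p_i}$ would be a finitely generated nonzero direct summand of $\T(R)$, contradicting Lemma~\ref{fgnot}. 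Hence $D=0$.

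For general $M$, let $T=\{m\in M: rm=0\text{ for some }r\in\Reg(R)\}$. Then $M/T$ is finitely generated and torsion-free, so the case above yields $\bigcap_r r(M/T)=0$; since the projection sends $rM$ onto $r(M/T)$, it carries $D$ into this intersection, whence $D\subseteq T$. It now suffices to find a single regular $a$ with $aT=0$: given such $a$ and $m\in D\subseteq T$, we have $am=0$, while $m\in aM$ gives $m=am'$, so $a^2m'=am=0$, hence $m'\in T$, hence $am'=0$, i.e. $m=0$. To produce $a$ I would exhibit a torsion-free submodule $F\subseteq M$ with $M/F$ bounded torsion, so that $aM\subseteq F$ for some regular $a$ and then $aT\subseteq F\cap T=0$. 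Over a domain this is classical: taking $m_1,\dots,m_r\in M$ whose images form a basis of $M\otimes\T(R)$ gives a free $F$, and clearing denominators shows every remaining generator has a regular multiple in $F$. In the present setting the same denominator-clearing is carried out inside $\overline M=\bigoplus_i R_{\p_i}^{d_i}$, using that $\Reg(R)^{-1}R=\T(R)$ so every denominator is regular.

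The main obstacle is precisely the choice of $F$ \emph{torsion-free} (equivalently, mapping injectively into $\overline M$): because $R$ need not be Noetherian, $T$ may fail to be finitely generated, so one cannot merely clear the annihilators of its generators to bound it. The remedy is to exploit the finiteness of $\Min(R)$ together with the semisimplicity of $\T(R)$: localizing at a minimal prime $\p_i$ identifies $M_{\p_i}\to\overline M_{\p_i}$ with the identity, so $T_{\p_i}=0$ for every $i$, and combined with the abundance of regular elements this should force the annihilator of $T$ to be a regular ideal, which is exactly the statement $aT=0$ needed above.
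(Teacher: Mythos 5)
Your torsion-free case is complete and is essentially the paper's own argument (divisible plus torsion-free gives a $\T(R)$-module, Lemma \ref{fiTR} gives injectivity, Lemma \ref{fgnot} gives the contradiction), with the welcome extra detail of actually verifying divisibility of $D$. Your reduction of the general case to producing a single regular $a$ with $aT=0$ is also correct and is the same reduction the paper makes via $rM\cap t(M)=rt(M)$.

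The gap is your final paragraph. Everything rests on the claim that the torsion submodule $T$ of a finitely generated module is annihilated by one regular element, and you do not prove it: ``this should force the annihilator of $T$ to be a regular ideal'' is a hope, not an argument, and the evidence you cite, namely $T_{\p_i}=0$ for every $i$, cannot by itself force anything. For a module that is not finitely generated, vanishing of all localizations at the minimal primes does not yield a regular annihilator: over a domain $R$ (the simplest $\tau_q$-semisimple ring), the unbounded torsion module $\bigoplus_{k\geq 1}R/r^kR$ ($r$ a nonzero nonunit) localizes to $0$ at the unique minimal prime yet has zero annihilator. So the finite generation of $M$ must enter in an essential way, and your sketch never makes it do so --- this is exactly the obstacle you yourself flagged and then left open. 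In fairness, the paper is no better at this point: it simply asserts ``Since $M$ is finitely generated, $t(M)$ can be annihilated by some $r\in\Reg(R)$,'' which is precisely the statement at issue and is nontrivial over non-Noetherian rings.

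For completeness, here is one way to close it. Write $\Min(R)=\{\p_1,\dots,\p_n\}$, choose $c_i\in\big(\bigcap_{j\neq i}\p_j\big)\setminus\p_i$, and set $c:=c_1+\cdots+c_n$, a regular element; a direct check shows $cR'$ lies in the canonical image of $R$ in $R':=\prod_i R/\p_i$. First, for each $i$ the image of $T$ in $M/\p_iM$ is torsion over the domain $R/\p_i$ (regular elements of $R$ stay nonzero there), and $M/\p_iM$ is finitely generated over $R/\p_i$, so by the classical domain case (your $F$-construction, which is fine over a domain) it is killed by some $u_i\notin\p_i$; replacing $u_i$ by $v_i:=u_ic_i+\sum_{j\neq i}c_j$, each $v_i$ is regular and still satisfies $v_iT\subseteq\p_iM$, whence $v:=v_1\cdots v_n$ is regular and $vT\subseteq\bigcap_i\p_iM$. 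Second, from $0\to R\to R'\to Q\to 0$ and $cQ=0$ one gets that $c$ kills $\Tor_1^R(Q,M)$, whose image in $M$ is exactly $\ker\big(M\to\bigoplus_i M/\p_iM\big)=\bigcap_i\p_iM$. Hence $cvT=0$ with $cv$ regular, which is the missing statement; your argument (and the paper's) is complete once this is inserted.
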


\begin{proof}
First, we assume that $M$ is finitely generated and torsion-free. Set $d(M):=\bigcap\limits_{r \in \Reg(R)} rM$. Then $d(M)$ is torsion-free and divisible, thus a $\T(R)$-module. On the contrary, suppose $d(M) \neq 0$. We can also assume $\Min(R) = \{\p_i \mid i=1, \dots, n\}$. Then $\T(R) \cong Q_1 \times \cdots \times Q_n$, where each $Q_i \cong R_{\p_i}$ is a field. So there exists a field $Q_i$, which is a direct summand of $d(M)$. By Lemma \ref{fiTR}, $Q_i$ is an injective submodule, and hence a direct summand of $M$. However, by Lemma \ref{fgnot}, $Q_i$ is not finitely generated, which is a contradiction.

Now suppose that $M$ is not torsion-free. Consider the following exact sequence:
$$0\rightarrow t(M)\rightarrow M\rightarrow M/t(M)\rightarrow0,$$
where $t(M)$ is the torsion submodule of $M$. Let $m \in \bigcap\limits_{r \in \Reg(R)} rM$. Then
$$m+t(M)\in \bigcap\limits_{r\in\Reg(R)}(rM+t(M))/t(M)=\bigcap\limits_{r\in\Reg(R)}M/t(M)=0$$
by the proof of the first part. So $m \in t(M)$. It is easy to verify $rM \cap t(M) = rt(M)$ for every $r \in \Reg(R)$. So we have $m \in \bigcap\limits_{r \in \Reg(R)} rt(M)$. Since $M$ is finitely generated, $t(M)$ can be annihilated by some $r \in \Reg(R)$. Hence $\bigcap\limits_{r \in \Reg(R)} rt(M) = 0$, and so $m = 0$. Thus, $\bigcap\limits_{r \in \Reg(R)} rM = 0$.
\end{proof}

\begin{proposition}\label{fg1}(cf. \cite[Lemma 3.3]{HBH08})
Let $R$ be a $\tau_q$-semisimple ring which does not contain a field as a direct summand. Then the map
$$\mu:R\rightarrow\prod\limits_{r\in\Reg(R)} R/rR$$
defined by $\mu(x) = (x + rR)$ for any $x \in R$ is a pure embedding.
\end{proposition}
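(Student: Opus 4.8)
The plan is to verify the two components of the claim---that $\mu$ is injective and that it is pure---each by reducing to the vanishing established in Lemma~\ref{fg0}.

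First I would check injectivity. If $\mu(x)=0$ for some $x\in R$, then $x\in rR$ for every $r\in\Reg(R)$, so $x\in\bigcap_{r\in\Reg(R)}rR$. Applying Lemma~\ref{fg0} to the finitely generated module $M=R$ yields $\bigcap_{r\in\Reg(R)}rR=0$, and hence $x=0$.

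For purity I would invoke the classical description of pure submodules through solvability of finite linear systems: $A=\mu(R)$ is pure in $B=\prod_{r\in\Reg(R)}R/rR$ exactly when every system $\sum_{j=1}^{k}c_{ij}X_j=a_i$ ($1\le i\le m$, $c_{ij}\in R$, $a_i\in A$) that has a solution in $B$ also has one in $A$. Writing $a_i=\mu(b_i)$ with $b_i\in R$ and inspecting a putative solution $(\xi_j)\in B^k$ in each coordinate $r\in\Reg(R)$, after lifting the coordinates $\xi_{j,r}\in R/rR$ to $R$ I obtain that the system $\sum_j c_{ij}X_j=b_i$ is solvable modulo $rR$ for every regular $r$. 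Setting $C=(c_{ij})\colon R^k\to R^m$, $N=\im C$, and $b=(b_1,\dots,b_m)$, this says precisely that $b\in N+rR^m$ for all $r\in\Reg(R)$, i.e. $b\in\bigcap_{r\in\Reg(R)}(N+rR^m)$.

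The decisive step is then to pass to the finitely generated cokernel $R^m/N$: since $(N+rR^m)/N=r(R^m/N)$, the image of $b$ lands in $\bigcap_{r\in\Reg(R)}r(R^m/N)$, which is $0$ by Lemma~\ref{fg0}. Therefore $b\in N$, so the system is solvable over $R$, furnishing the desired solution inside $A$. I expect the only point requiring care to be this coordinatewise lifting---translating a single solution over the product $B$ into simultaneous solvability modulo every $rR$---since once the intersection $\bigcap_{r\in\Reg(R)}(N+rR^m)$ is in hand the statement collapses directly onto Lemma~\ref{fg0}, with no further estimates needed.
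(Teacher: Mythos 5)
Your proof is correct, but it takes a genuinely different route from the paper's. The paper checks purity through the tensor characterization: it suffices that $\mu\otimes_R\Id_M$ be a monomorphism for every finitely presented $R$-module $M$; since $M$ is finitely presented, tensoring commutes with the direct product, giving $\bigl(\prod_{r\in\Reg(R)}R/rR\bigr)\otimes_RM\cong\prod_{r\in\Reg(R)}M/rM$, whence $\Ker(\mu\otimes_R\Id_M)\cong\bigcap_{r\in\Reg(R)}rM=0$ by Lemma \ref{fg0}. You instead use the equational characterization of purity (every finite linear system with constants in $\mu(R)$ solvable in the product is solvable in $\mu(R)$), lift a solution coordinatewise, and feed the finitely generated cokernel $R^m/N$ of the system's matrix into Lemma \ref{fg0}; your identification $(N+rR^m)/N=r(R^m/N)$ and the conclusion $b\in N$ are both sound, and the injectivity argument is identical to the paper's. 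The two proofs rest on the same pillar---Lemma \ref{fg0} applied to a finitely generated module---so they are close in spirit, but each buys something different: the paper's version is shorter, at the cost of invoking the commutation of tensor products with direct products for finitely presented modules; yours avoids that fact entirely, trading it for the (equally standard, see \cite[Lemma 2.19]{gt}) matrix criterion for purity, and it displays concretely where finite generation enters, namely through the cokernel of the coefficient matrix rather than through an abstract finitely presented test module.
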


\begin{proof}
Note that $\Ker(\mu) = \bigcap\limits_{r \in \Reg(R)} rR$. It follows by Lemma \ref{fg0} that $\mu$ is an embedding. To show that $\mu$ is pure, it suffices to show that
$$\mu\otimes_R\Id_M:R\otimes_RM\rightarrow \prod\limits_{r\in\Reg(R)} R/rR\otimes_RM$$
is a monomorphism for any finitely presented $R$-module $M$. Since $M$ is finitely presented, we have
$$\prod\limits_{r\in\Reg(R)} R/rR\otimes_RM\cong \prod\limits_{r\in\Reg(R)} M/rM.$$
Hence $\Ker(\mu \otimes_R \Id_M) \cong \bigcap\limits_{r \in \Reg(R)} rM$, which is equal to $0$ by Lemma \ref{fg0}. So $\mu$ is a pure embedding.
\end{proof}

\section{Baer modules over $\tau_q$-semisimple rings}

Recall that an $R$-module $T$ is said to be a torsion module if, for any $t \in T$, there exists a non-zero-divisor $r \in R$ such that $rt = 0$.

%\begin{definition}
%An $R$-module $M$ is said to be a Baer module if $\Ext_R^1(M, T) = 0$ for every torsion $R$-module $T$.
%\end{definition}

Trivially, projective modules are Baer modules. Moreover, it was proved in \cite{HBH08} that every Baer module is projective over integral domains. The main motivation of this paper is to extend this result to $\tau_q$-semisimple rings. First, we investigate some basic properties of Baer modules over $\tau_q$-semisimple rings.

\begin{proposition}\label{baerflat}(cf. \cite[Lemma 1]{EF88})
Let $R$ be a $\tau_q$-semisimple ring. Then every Baer $R$-module is flat.
\end{proposition}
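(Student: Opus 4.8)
The plan is to reduce flatness of a Baer module $M$ to the \emph{reg-injectivity} of its character module $M^+:=\Hom_{\mathbb{Z}}(M,\mathbb{Q}/\mathbb{Z})$, and then use Lemma \ref{0-d} to upgrade reg-injectivity to genuine injectivity. Recall Lambek's criterion: an $R$-module $M$ is flat if and only if $M^+$ is injective. Since $R$ is $\tau_q$-semisimple, Lemma \ref{0-d}(6) says that every reg-injective module is already injective; hence it suffices to prove that $M^+$ is reg-injective, that is, that $\Ext_R^1(R/I,M^+)=0$ for every \emph{regular} ideal $I$ of $R$.

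The bridge between the Baer hypothesis on $M$ and the reg-injectivity of $M^+$ is the standard Ext--Tor duality $\Tor_n^R(A,B)^+\cong\Ext_R^n(A,B^+)$, which holds because $\mathbb{Q}/\mathbb{Z}$ is an injective cogenerator over $\mathbb{Z}$, so that $(-)^+$ is exact. Applying it in both variables, together with the symmetry of $\Tor$ over the commutative ring $R$, I obtain, for any ideal $I$,
$$\Ext_R^1(R/I,M^+)\cong \Tor_1^R(R/I,M)^+\cong \Tor_1^R(M,R/I)^+\cong \Ext_R^1(M,(R/I)^+).$$
Thus it remains only to show the right-hand side vanishes whenever $I$ is regular.

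This is exactly where the Baer hypothesis enters. If $I$ is regular it contains a non-zero-divisor $r$, and since $rR\subseteq I$ we have $r\cdot(R/I)=0$, so $R/I$ is a torsion module; moreover $(r\varphi)(x+I)=\varphi(rx+I)=\varphi(0)=0$ for $\varphi\in(R/I)^+$ shows $(R/I)^+$ is annihilated by $r$ as well, hence is also torsion. As $M$ is Baer, $\Ext_R^1(M,(R/I)^+)=0$, and the displayed chain then gives $\Ext_R^1(R/I,M^+)=0$ for every regular ideal $I$. Therefore $M^+$ is reg-injective, hence injective by Lemma \ref{0-d}(6), and consequently $M$ is flat.

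I expect the main difficulty to be conceptual rather than computational: it lies in finding the right reduction from ``$M$ flat'' (which a priori requires $\Tor_1^R(R/I,M)=0$ for \emph{all} finitely generated ideals, including the troublesome non-regular ones) to ``$M^+$ reg-injective'' (which requires vanishing only against regular ideals). The $\tau_q$-semisimplicity is precisely what closes this gap through the equivalence reg-injective $\Leftrightarrow$ injective of Lemma \ref{0-d}; without it one would be stuck having to control $\Tor$ against non-regular ideals, against which the torsion hypothesis on $M$ offers no leverage. The remaining ingredients---the duality isomorphism and Lambek's criterion---are standard and need only be cited.
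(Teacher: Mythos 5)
Your proof is correct, and it takes a genuinely different reduction than the paper's, although the two share one key step. The paper tests flatness directly against finitely generated ideals: it shows $\Tor_1^R(R/I,M)=0$ by a two-case argument, where for regular $I$ it uses exactly your duality computation $\bigl(\Tor_1^R(R/I,M)\bigr)^+\cong\Ext_R^1\bigl(M,(R/I)^+\bigr)=0$ (noting $(R/I)^+$ is torsion), and for non-regular finitely generated $I$ it invokes a structural fact drawn from the \emph{proof} of \cite[Theorem 4.3]{z-q-proj-sim} --- that some $s\in R$ makes $I\oplus Rs$ a regular ideal --- to reduce to the regular case. You instead recast flatness as injectivity of the character module $M^+$ via Lambek's criterion, verify reg-injectivity of $M^+$ by the same duality-plus-torsion argument (which, as you note, works for arbitrary regular ideals, not just finitely generated ones), and then let Lemma \ref{0-d}(6) absorb the entire problem of non-regular ideals. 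So $\tau_q$-semisimplicity enters the two proofs in different guises: in the paper through an ideal-complementation fact quoted from the body of an external proof, in yours through an equivalence already recorded as a stated lemma in this paper. Your route is arguably cleaner and more self-contained relative to what this paper cites explicitly; the paper's route makes visible, at the level of ideals, exactly where the ring hypothesis bites, and stays within the more elementary framework of a finite-type flatness test rather than appealing to Lambek duality.
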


\begin{proof}
Let $M$ be a Baer $R$-module and $I$ be a finitely generated ideal of $R$. We will show $\Tor_1^R(R/I, M) = 0$ by dividing it into two cases:
	
\textbf{Case 1}:  Suppose $I$ is a regular ideal of $R$. Then $(R/I)^+ := \Hom_{\mathbb{Z}}(R/I, \mathbb{Q}/\mathbb{Z})$ is certainly a torsion $R$-module. Hence $(\Tor_1^R(R/I, M))^+ \cong \Ext_R^1(M, (R/I)^+) = 0$, and so $\Tor_1^R(R/I, M) = 0$.

\textbf{Case 2}: Suppose $I$ is not a regular ideal of $R$. Then by the proof of \cite[Theorem 4.3]{z-q-proj-sim}, there exists $s \in R$ such that $I \oplus Rs$ is a regular ideal of $R$. Thus $\Tor_1^R(R/(I \oplus Rs), M) = 0$ by the proof of Case 1. Hence the natural map $(I \oplus Rs) \otimes_R M \rightarrow M$ is a monomorphism, and so is the natural map $I \otimes_R M \rightarrow M$. Thus $\Tor_1^R(R/I, M) = 0$.

Consequently, $M$ is a flat $R$-module.
\end{proof}

\begin{proposition}\label{baerpd1}(cf. \cite[Lemma 2]{EF88})
Let $R$ be a $\tau_q$-semisimple ring. Then every Baer $R$-module has projective dimension $\leq 1$.
\end{proposition}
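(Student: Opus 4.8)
The plan is to invoke $M$ being flat (Proposition \ref{baerflat}) and to prove the equivalent statement that $\Ext_R^2(M,N)=0$ for every $R$-module $N$. The backbone of the argument is the following acyclicity remark: since $\T(R)$ is a flat $R$-algebra by Lemma \ref{fiTR} and $\T(R)$ is a semisimple ring by Lemma \ref{0-d}, base change along the flat map $R\to\T(R)$ sends a projective resolution of $M$ to a projective resolution of $M\otimes_R\T(R)$, whence $\Ext_R^i(M,W)\cong\Ext_{\T(R)}^i(M\otimes_R\T(R),W)=0$ for all $i\ge 1$ and every $\T(R)$-module $W$ (the vanishing being because $\T(R)$ has global dimension $0$). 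In other words, every $\T(R)$-module is $\Hom_R(M,-)$-acyclic.

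I would first dispose of the torsion-free case. If $L$ is torsion-free then the localization map $L\to L\otimes_R\T(R)$ is injective with torsion cokernel, giving a short exact sequence $0\to L\to L\otimes_R\T(R)\to C\to 0$ in which the middle term is a $\T(R)$-module and $C$ is torsion. Dimension shifting through this sequence, combined with the acyclicity above and the Baer condition $\Ext_R^1(M,C)=0$, yields $\Ext_R^2(M,L)\cong\Ext_R^1(M,C)=0$. For an arbitrary $N$ I would then use the canonical sequence $0\to t(N)\to N\to N/t(N)\to 0$ with $t(N)$ the torsion submodule: the quotient $N/t(N)$ is torsion-free, so $\Ext_R^2(M,N/t(N))=0$ by the previous step, and everything reduces to showing $\Ext_R^2(M,T)=0$ for a torsion module $T$.

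This last reduction is where I expect the main obstacle to lie. The natural move is to pick an injective copresentation $0\to T\to W\to W/T\to 0$, giving $\Ext_R^2(M,T)\cong\Ext_R^1(M,W/T)$, and to hope that $W$ (hence $W/T$) can be taken torsion, so that the Baer hypothesis finishes the proof; this amounts to stability of the reg-torsion theory, i.e.\ that the injective hull of a torsion module is again torsion. For hereditary torsion theories over Noetherian rings this follows from the decomposition of injectives into the $\E(R/\p)$, but here $R$ is only reduced with finitely many minimal primes and need not be Noetherian, so stability must be argued by hand (for instance by testing $\Ext_R^1(R/I,-)$ against regular ideals $I$ and using $\Min(R)$). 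The role of flatness and of Proposition \ref{fg1} is precisely to make this torsion step tractable: flatness gives, for a free presentation $0\to K\to F\to M\to 0$, that $K$ is flat and $\Ext_R^2(M,T)\cong\Ext_R^1(K,T)$, so the required vanishing is equivalent to the first syzygy $K$ being itself a Baer module; and the pure embedding $R\hookrightarrow\prod_{r}R/rR$ together with the identity $\Ext_R^1(M,\prod_{r}R/rR)=\prod_{r}\Ext_R^1(M,R/rR)=0$ provides the test module against which this Baer-ness of $K$ can be checked. Once $\Ext_R^2(M,T)=0$ is secured for torsion $T$, splicing the two cases through the torsion sequence gives $\Ext_R^2(M,N)=0$ for all $N$, that is, $\pd_R M\le 1$.
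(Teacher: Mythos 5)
Your reduction is sound as far as it goes: the base-change isomorphism $\Ext_R^i(M,W)\cong\Ext_{\T(R)}^i(M\otimes_R\T(R),W)$ for $\T(R)$-modules $W$ is correct, the torsion-free case is complete, and the sequence $0\to t(N)\to N\to N/t(N)\to 0$ correctly reduces everything to showing $\Ext_R^2(M,T)=0$ for torsion $T$. But that last step --- which you yourself flag as the main obstacle --- is exactly where the content of the proposition lies, and you do not prove it. Neither of your two suggested routes is carried out: the stability claim (injective hulls of torsion modules are torsion) is merely asserted as something to be ``argued by hand,'' and the second route is off target --- knowing that the syzygy $K$ is flat does not make $K$ Baer (that is essentially what is to be proved), and the pure embedding $R\hookrightarrow\prod_{r}R/rR$ together with $\Ext_R^1(M,\prod_{r}R/rR)=0$ belongs to the Mittag-Leffler machinery of the paper's Theorem \ref{main} (projectivity of countably presented Baer modules), not to a proof of $\pd_RM\le 1$; purity of that embedding gives no control over $\Ext_R^2(M,T)$.

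The gap can be closed, but only by the idea your sketch never supplies, which is the paper's key mechanism: item (6) of Lemma \ref{0-d}, reg-injective $\Rightarrow$ injective. Concretely, for torsion $T$ let $E=\E(T)$ and let $t(E)$ be its torsion submodule; any map $f:I\to t(E)$ from a regular ideal $I$ extends by injectivity of $E$ to $g:R\to E$, and if $r\in I$ is regular then $rg(1)=f(r)\in t(E)$ forces $g(1)\in t(E)$, so $t(E)$ is reg-injective, hence injective by Lemma \ref{0-d}; since $T$ is essential in $E$, this yields $t(E)=E$, so $E$ and $E/T$ are torsion, and then $\Ext_R^2(M,T)\cong\Ext_R^1(M,E/T)=0$ by the Baer hypothesis. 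Note that the paper runs this argument uniformly for an arbitrary module $N$, using the hull $E=\{e\in\E(N)\mid re\in N\ \text{for some } r\in\Reg(R)\}$, which is reg-injective with $E/N$ torsion; that single construction makes your flatness input (Proposition \ref{baerflat}), the base-change step, and the torsion/torsion-free case split all unnecessary.
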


\begin{proof}
Let $M$ be a Baer $R$-module and $N$ be an $R$-module. Then there exists a reg-injective envelope $E$ of $N$ such that $E/N$ is torsion. Indeed, by setting $E := \{e \in \E(M) \mid re \in M \text{ for some } r \in \Reg(R)\}$, where $\E(M)$ is the injective envelope of $M$, it is a routine way to show that $E$ is a reg-injective module such that $E/N$ is torsion. Since $R$ is a $\tau_q$-semisimple ring, the reg-injective $R$-module $E$ is injective by Lemma \ref{0-d}. Considering the exact sequence
$$0=\Ext_R^1(M,E/N)\rightarrow\Ext_R^2(M,N)\rightarrow\Ext_R^2(M,E)=0,$$
we have $\Ext_R^2(M, N) = 0$. Hence $\pd_R M \leq 1$.
\end{proof}

Let $M$ be an $R$-module. An $R$-submodule $N$ of $M$ with $\pd_R(N) \leq 1$ is said to be tight if $\pd_R(M/N) \leq 1$.

\begin{lemma}(cf. \cite[Lemma 3]{EF88})
Every tight submodule of a Baer module is a Baer module.
\end{lemma}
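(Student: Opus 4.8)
The plan is to exploit the short exact sequence attached to a tight submodule together with the long exact sequence in $\Ext$, and to let the projective dimension bound on the quotient do all the work. Let $M$ be a Baer $R$-module and let $N$ be a tight submodule of $M$. By the very definition of tightness we have $\pd_R(M/N)\leq 1$ (the hypothesis $\pd_R(M)\leq 1$ required for tightness to make sense is automatic here, since by Proposition \ref{baerpd1} every Baer module over a $\tau_q$-semisimple ring has projective dimension at most $1$). Consider the canonical short exact sequence
$$0\rightarrow N\rightarrow M\rightarrow M/N\rightarrow 0.$$

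Next, I would fix an arbitrary torsion $R$-module $T$ and apply the contravariant functor $\Hom_R(-,T)$ to this sequence, obtaining the segment of the long exact sequence
$$\Ext_R^1(M,T)\rightarrow\Ext_R^1(N,T)\rightarrow\Ext_R^2(M/N,T).$$
The left-hand term vanishes because $M$ is a Baer module and $T$ is torsion. The right-hand term vanishes because $\pd_R(M/N)\leq 1$, so $\Ext_R^2(M/N,-)=0$ identically; in particular it kills the torsion module $T$. Exactness then forces $\Ext_R^1(N,T)=0$.

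Since $T$ was an arbitrary torsion module, this shows $\Ext_R^1(N,T)=0$ for every torsion $T$, which is exactly the assertion that $N$ is a Baer module, completing the argument. Honestly, there is no serious obstacle in this proof: it is a one-step dimension-shifting argument, and the only point worth stating carefully is that the tightness hypothesis is precisely what is needed to guarantee that the connecting $\Ext^2$ term vanishes for \emph{all} coefficient modules, torsion or not, so that the Baer property of $M$ transfers cleanly to $N$.
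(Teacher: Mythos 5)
Your proof is correct and is essentially identical to the paper's own argument: both apply $\Hom_R(-,T)$ to $0\rightarrow N\rightarrow M\rightarrow M/N\rightarrow 0$ and use the vanishing of $\Ext_R^1(M,T)$ (since $M$ is Baer) and of $\Ext_R^2(M/N,T)$ (since $\pd_R(M/N)\leq 1$ by tightness) to conclude $\Ext_R^1(N,T)=0$. The only cosmetic difference is your parenthetical appeal to Proposition \ref{baerpd1}, which is unnecessary since the paper's definition of a tight submodule already presupposes $\pd_R(M)\leq 1$, so the lemma does not need the $\tau_q$-semisimple hypothesis at all.
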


\begin{proof}
Let $M$ be a Baer module and $N$ be a tight submodule of $M$. Let $T$ be a torsion module. Consider the following exact sequence $$0=\Ext_R^1(M,T)\rightarrow\Ext_R^1(N,T)\rightarrow\Ext_R^2(M/N,T)=0.$$
We have $\Ext_R^1(N, T) = 0$, and hence $N$ is a Baer module.
\end{proof}

\begin{lemma}\label{1bapd1}(cf. \cite[Lemma 4]{EF88})
An $R$-module $B$ of projective dimension $\leq 1$ is a Baer module exactly if $\Ext_R^1(B, T) = 0$ for all direct sums $T$ of $\bigoplus\limits_{r \in \Reg(R)} (R/Rr)$.
\end{lemma}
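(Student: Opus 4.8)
The plan is to prove the two implications separately, since only the reverse one carries any content. The forward direction is immediate: each cyclic module $R/Rr$ with $r\in\Reg(R)$ is annihilated by the regular element $r$ and is therefore torsion, so any direct sum of copies of $W:=\bigoplus_{r\in\Reg(R)}(R/Rr)$ is again torsion. Hence if $B$ is a Baer module, then $\Ext_R^1(B,T)=0$ for every such $T$ simply by the definition of a Baer module.

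For the reverse direction I would assume $\Ext_R^1(B,W^{(\kappa)})=0$ for every cardinal $\kappa$ and take an arbitrary torsion module $T$, aiming to conclude $\Ext_R^1(B,T)=0$. First I would build a surjection onto $T$ from a direct sum of cyclic torsion modules: choosing a generating set $\{t_\alpha\}_{\alpha\in\Lambda}$ of $T$ together with regular elements $r_\alpha$ satisfying $r_\alpha t_\alpha=0$, the assignments $1\mapsto t_\alpha$ give surjections $R/Rr_\alpha\twoheadrightarrow Rt_\alpha$ and hence an epimorphism $\pi\colon F:=\bigoplus_{\alpha\in\Lambda}R/Rr_\alpha\twoheadrightarrow T$. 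Setting $K:=\Ker(\pi)$ produces a short exact sequence $0\to K\to F\to T\to 0$.

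The crucial observation is that $F$ is a direct summand of a direct sum of copies of $W$. Regrouping the summands of $F$ by the value of $r$ gives $F\cong\bigoplus_{r\in\Reg(R)}(R/Rr)^{(\lambda_r)}$, where $\lambda_r=|\{\alpha\mid r_\alpha=r\}|$; choosing any cardinal $\kappa$ with $\lambda_r\leq\kappa$ for all $r$ (for instance $\kappa=|\Lambda|$), $F$ becomes a direct summand of $W^{(\kappa)}=\bigoplus_{r\in\Reg(R)}(R/Rr)^{(\kappa)}$. Since $\Ext_R^1(B,-)$ sends direct summands to direct summands, the hypothesis $\Ext_R^1(B,W^{(\kappa)})=0$ forces $\Ext_R^1(B,F)=0$. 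I would then feed the short exact sequence into the long exact sequence for $\Ext_R^\bullet(B,-)$, extracting the exact piece
\[
\Ext_R^1(B,F)\to\Ext_R^1(B,T)\to\Ext_R^2(B,K).
\]
Because $\pd_RB\leq 1$, the right-hand term vanishes for \emph{every} module $K$, while the left-hand term vanishes by the previous step; exactness then yields $\Ext_R^1(B,T)=0$, so $B$ is Baer.

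The heart of the argument—and the only delicate point—is the reduction to the canonical torsion cover $F$ and the recognition that it embeds as a direct summand of a direct sum of copies of $W$; once that is secured, the assumption $\pd_RB\leq 1$ finishes matters through the vanishing of $\Ext_R^2(B,K)$. I would emphasize that no structural information about $K$ (such as its being torsion) is needed, precisely because $\Ext_R^2(B,-)$ is identically zero under the projective dimension hypothesis.
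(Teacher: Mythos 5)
Your proposal is correct and follows essentially the same route as the paper: cover the arbitrary torsion module by a direct sum of cyclic modules $R/Rr_\alpha$ (a summand of $W^{(\kappa)}$), then use the hypothesis to kill $\Ext_R^1(B,-)$ on the cover and $\pd_RB\leq 1$ to kill $\Ext_R^2(B,-)$ on the kernel in the long exact sequence. Your write-up is in fact slightly more careful than the paper's, which asserts the surjection from $T_g^{(\kappa)}$ without spelling out the summand/regrouping argument.
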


\begin{proof}
Let $N$ be a torsion module. Set $T_{g} := \bigoplus\limits_{r \in \Reg(R)} (R/Rr)$. Then there is an exact sequence
$$0\rightarrow K\rightarrow T_g^{(\kappa)}\rightarrow N\rightarrow0.$$
Considering the exact sequence
$$0=\Ext_R^1(B,T^{(\kappa)})\rightarrow\Ext_R^1(B,N)\rightarrow\Ext_R^2(B,K)=0,$$
we have $\Ext_R^1(B, N) = 0$. Hence $B$ is a Baer module.
\end{proof}

\begin{lemma}\label{gencard}
Let $R$ be a $\tau_q$-semisimple ring. Let $M$ be an infinitely generated projective $R$-module and $N$ be an infinitely generated projective submodule of $M$. Then the minimum cardinality of the generators of $N$ is less than or equal to that of $M$.
\end{lemma}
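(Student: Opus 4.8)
The plan is to read both generation numbers off from the total ring of quotients $Q:=\T(R)$, exploiting that $Q$ is a finite product of fields. Recall from the proof of Lemma \ref{fgnot} that $Q\cong Q_1\times\cdots\times Q_n$ with $Q_i\cong R_{\p_i}$ a field, and that $Q$ is flat over $R$ by Lemma \ref{fiTR}. The idempotents of $Q$ split any $Q$-module $V$ as $V=V_1\oplus\cdots\oplus V_n$ with $V_i$ a $Q_i$-vector space, and an elementary computation shows that the minimal number of generators of $V$ over $Q$ equals $\max_{1\le i\le n}\dim_{Q_i}V_i$. Writing $\mathrm{gen}(X)$ for the minimal cardinality of a generating set of an $R$-module $X$, and setting $\rho_i(X):=\dim_{Q_i}(Q_i\otimes_R X)$ and $\rho(X):=\max_i\rho_i(X)$, the goal becomes the chain $\mathrm{gen}(N)\le\rho(N)\le\rho(M)\le\mathrm{gen}(M)$.

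First I would record two easy inequalities. Since $Q$ is flat, the inclusion $N\hookrightarrow M$ tensors up to inclusions $Q_i\otimes_R N\hookrightarrow Q_i\otimes_R M$ of $Q_i$-vector spaces, giving $\rho_i(N)\le\rho_i(M)$ and hence $\rho(N)\le\rho(M)$. Next, applying $Q\otimes_R-$ to any generating set of $M$ produces a generating set of $Q\otimes_R M$ over $Q$, so by the displayed formula $\rho(M)=\mathrm{gen}_Q(Q\otimes_R M)\le\mathrm{gen}(M)$. These two steps dispose of the middle and right inequalities, so only $\mathrm{gen}(N)\le\rho(N)$ remains; note also that $\mathrm{gen}(M)\ge\aleph_0$, as $M$ is not finitely generated.

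The core step treats $N$ by a case split. If $\mathrm{gen}(N)=\aleph_0$, then $\mathrm{gen}(N)=\aleph_0\le\mathrm{gen}(M)$ and there is nothing more to do. Otherwise $\mathrm{gen}(N)>\aleph_0$, and I would invoke Kaplansky's theorem to write $N=\bigoplus_{j\in J}N_j$ as a direct sum of nonzero countably generated projective summands; then $\mathrm{gen}(N)\le\aleph_0\cdot|J|=\max(\aleph_0,|J|)$, which forces $J$ uncountable and $\mathrm{gen}(N)\le|J|$. Each nonzero $N_j$ is projective, hence flat, hence torsion-free, so $Q\otimes_R N_j\ne 0$ and some component $Q_i\otimes_R N_j$ is nonzero; choose for each $j$ an index $\iota(j)\in\{1,\dots,n\}$ with $Q_{\iota(j)}\otimes_R N_j\ne 0$. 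Since there are only finitely many indices while $J$ is infinite, some fibre $J^*:=\iota^{-1}(i^*)$ satisfies $|J^*|=|J|$, and because $Q_{i^*}\otimes_R-$ commutes with direct sums,
\[
\rho_{i^*}(N)=\dim_{Q_{i^*}}\bigoplus_{j\in J}\big(Q_{i^*}\otimes_R N_j\big)=\sum_{j\in J}\dim_{Q_{i^*}}\big(Q_{i^*}\otimes_R N_j\big)\ge|J^*|=|J|\ge\mathrm{gen}(N).
\]
Thus $\mathrm{gen}(N)\le\rho_{i^*}(N)\le\rho(N)$, and combining with the previous paragraph gives $\mathrm{gen}(N)\le\rho(N)\le\rho(M)\le\mathrm{gen}(M)$.

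The main obstacle is precisely the inequality $\mathrm{gen}(N)\le\rho(N)$: a priori the minimal number of generators of an infinitely generated projective could exceed every generic rank $\rho_i(N)$, and indeed this can fail in the finitely generated regime. The feature that removes the gap is the finiteness of $\Min(R)$ guaranteed by $\tau_q$-semisimplicity — it is exactly what lets the pigeonhole argument compress a Kaplansky decomposition with $|J|$ uncountably many summands onto a single minimal prime whose rank is already $\ge|J|$. I expect the two points needing care to be purely bookkeeping: verifying that the countably generated case is genuinely handled by the hypothesis that $M$ is not finitely generated, and summing the (possibly infinite) $Q_{i^*}$-dimensions as cardinals so that $|J|$ nonzero summands contribute at least $|J|$.
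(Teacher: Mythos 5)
Your proof is correct, and it shares the paper's basic framework: compare generating cardinalities through the components of $\T(R)\cong R_{\p_1}\times\cdots\times R_{\p_n}$. But the key content differs. The paper's entire proof is the assertion (prefaced by ``Note that'') that for an infinitely generated projective module $P$ over a $\tau_q$-semisimple ring, the minimal number of generators equals $\max_i \dim_{R_{\p_i}}(R_{\p_i}\otimes_R P)$, after which the conclusion follows from the embedding $\T(R)\otimes_R N\hookrightarrow \T(R)\otimes_R M$; the hard half of that equality (generators $\leq$ rank) is never argued. You actually prove that half, in the only case where it is needed: for $N$ with $\mathrm{gen}(N)>\aleph_0$, via Kaplansky's decomposition into nonzero countably generated projective summands, the observation that each summand survives localization at some minimal prime (projective $\Rightarrow$ flat $\Rightarrow$ torsion-free, so $\T(R)\otimes_R N_j\neq 0$), and a pigeonhole over the finitely many minimal primes; the countably generated case you dispose of directly from $\mathrm{gen}(N)=\aleph_0\leq\mathrm{gen}(M)$. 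This buys two things the paper's sketch does not have. First, the paper's equality, taken literally in the countably-but-not-finitely generated case, would require an additional nontrivial input (essentially Vasconcelos's theorem that a projective module of constant finite rank is finitely generated, to rule out infinitely generated projectives whose ranks at all minimal primes are finite); your case split makes that issue moot. Second, your argument uses projectivity only of $N$ (via Kaplansky), never of $M$, so it establishes a slightly more general statement. The cardinal bookkeeping you flag as delicate is fine: $\aleph_0\cdot|J|=\max(\aleph_0,|J|)$ forces $J$ uncountable with $\mathrm{gen}(N)\leq |J|$, and a direct sum with $|J^*|$ nonzero summands has dimension at least $|J^*|$ over the relevant field.
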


\begin{proof}
Assume that $\Min(R) = \{\p_i \mid i=1, \dots, n\}$. Let $\T(R)$ be the total quotient ring of $R$. Then $\T(R) \cong R_{\p_1} \times \cdots \times R_{\p_n}$, where each $R_{\p_i}$ is a field. Note that the minimum cardinality of generators of an infinitely generated projective $R$-module $M$ is equal to the maximal dimension of $\T(R) \otimes_R M$ over all its components $R_{\p_i}$'s. Since $\T(R) \otimes_R N$ is a $\T(R)$-submodule of $\T(R) \otimes_R M$, the result is easily obtained.
\end{proof}

\begin{proposition}\label{2bapd1}(cf. \cite[Lemma 5]{EFS90})
Let $R$ be a $\tau_q$-semisimple ring. If an $R$-module $B$ of projective dimension $\leq 1$ can be generated by $\kappa$ (where $\kappa$ is an infinite cardinal) elements, then for $T_0 = \bigoplus\limits_{\kappa} \left(\bigoplus\limits_{r \in \Reg(R)} R/Rr \right)$, $\Ext_R^1(B, T_0) = 0$ implies that $B$ is a Baer module.
\end{proposition}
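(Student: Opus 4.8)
The plan is to deduce the full Baer test of Lemma~\ref{1bapd1} from the single hypothesis $\Ext_R^1(B,T_0)=0$. Write $T_g:=\bigoplus_{r\in\Reg(R)}(R/Rr)$, so that $T_0=T_g^{(\kappa)}$; by Lemma~\ref{1bapd1} it suffices to prove $\Ext_R^1(B,T_g^{(\lambda)})=0$ for every cardinal $\lambda$. First I would fix a free presentation $0\to K\xrightarrow{\iota}F\to B\to 0$ with $F=R^{(\kappa)}$. Since $\pd_R B\le 1$ and $F$ is projective, the syzygy $K$ is projective, and being a projective submodule of the $\kappa$-generated projective module $F$ it is generated by at most $\kappa$ elements by Lemma~\ref{gencard} (the case of finitely generated $K$ being trivial). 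Fix generators $\{k_j\}_{j\in J}$ of $K$ with $|J|\le\kappa$. Applying $\Hom_R(-,T)$ and using $\Ext_R^1(F,T)=0$, one has for every module $T$ the exact sequence
$$\Hom_R(F,T)\xrightarrow{\iota^\ast}\Hom_R(K,T)\to\Ext_R^1(B,T)\to 0,$$
so that $\Ext_R^1(B,T)=0$ is equivalent to surjectivity of the restriction $\iota^\ast$.

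The key step is a support count. Fix a cardinal $\lambda$ and a homomorphism $f\colon K\to T_g^{(\lambda)}$. Each image $f(k_j)$ lies in the direct sum $T_g^{(\lambda)}$ and so has finite support; hence $S:=\bigcup_{j\in J}\mathrm{supp}(f(k_j))\subseteq\lambda$ satisfies $|S|\le|J|\cdot\aleph_0\le\kappa$. Therefore $\im f\subseteq T_S:=\bigoplus_{\alpha\in S}T_g$, and since $|S|\le\kappa$ the module $T_S$ is isomorphic to a direct summand of $T_0=T_g^{(\kappa)}$. As $\Ext_R^1(B,-)$ takes a direct sum of two modules to the direct sum of the corresponding groups, $\Ext_R^1(B,T_0)=0$ forces $\Ext_R^1(B,T_S)=0$; by the previous paragraph the restriction $\Hom_R(F,T_S)\to\Hom_R(K,T_S)$ is surjective. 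Thus $f$, regarded as a map into $T_S$, lifts to some $h\colon F\to T_S$ with $h\iota=f$, and composing $h$ with the inclusion $T_S\hookrightarrow T_g^{(\lambda)}$ produces a map $F\to T_g^{(\lambda)}$ whose restriction along $\iota$ is $f$. Hence $\iota^\ast\colon\Hom_R(F,T_g^{(\lambda)})\to\Hom_R(K,T_g^{(\lambda)})$ is surjective and $\Ext_R^1(B,T_g^{(\lambda)})=0$. Since $\lambda$ is arbitrary, Lemma~\ref{1bapd1} yields that $B$ is a Baer module.

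The step I expect to be the crux is the support count: it is exactly the bound $|J|\le\kappa$ on the number of generators of the syzygy $K$---furnished by Lemma~\ref{gencard}, which in turn rests on the decomposition $\T(R)\cong R_{\p_1}\times\cdots\times R_{\p_n}$ into fields---that forces every homomorphism from $K$ into an arbitrarily large direct sum $T_g^{(\lambda)}$ to factor through a summand indexed by at most $\kappa$ coordinates. This collapses the whole test class $\{T_g^{(\lambda)}\}_\lambda$ of Lemma~\ref{1bapd1} down to the single module $T_0$, after which everything is a routine diagram chase with the $\Hom$--$\Ext$ sequence.
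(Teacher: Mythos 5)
Your proposal is correct and follows essentially the same route as the paper's proof: a free presentation $0\to K\to F\to B\to 0$ with $F$ free on $\kappa$ generators, the bound on generators of the projective syzygy via Lemma~\ref{gencard}, the observation that any homomorphism from the syzygy into a direct sum of copies of $\bigoplus_{r\in\Reg(R)}R/Rr$ lands in a $\kappa$-generated summand isomorphic to a summand of $T_0$, and then Lemma~\ref{1bapd1}. In fact you fill in two details the paper leaves implicit (the finite-support counting argument and the reduction from $\Ext_R^1(B,T_0)=0$ to vanishing on its direct summands), and you avoid the paper's unnecessary assumption that the syzygy is free.
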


\begin{proof}
Let
$$0\rightarrow H\xrightarrow{\alpha} F\rightarrow B\rightarrow 0$$
be a projective resolution of $B$, where $F$ is free with $\kappa$ generators. Since $R$ is a $\tau_q$-semisimple ring, $H$ is also projective with at most $\kappa$ generators by Lemma \ref{gencard}. We can assume that $H$ is a free module. Consider the induced sequence $$\Hom_R(F,T)\xrightarrow{\alpha^{\ast}}\Hom_R(H,T)\rightarrow\Ext_R^1(B,T)\rightarrow0,$$
where $T$ is a direct sum of $\bigoplus\limits_{r \in \Reg(R)} R/Rr$. Since every morphism $\eta$ in $\Hom_R(H, T)$ lands in a $\kappa$-generated summand of $T$. Such a summand is isomorphic to a summand of $T_0$. So $\eta$ must be induced by a homomorphism in $\Hom_R(F, T)$. Hence $\alpha^{\ast}$ is an epimorphism and so $\Ext_R^1(B, T) = 0$. Hence $B$ is a Baer module by Lemma \ref{1bapd1}.
\end{proof}

\section{main results}

Let $\gamma$ be a limit ordinal. A subset $C$ of $\gamma$ is called a club (in $\gamma$) if (1) $C$ is closed in $\gamma$, i.e., for all $Y \subseteq C$, if $\sup Y \in \gamma$, then $\sup Y \in C$; and (2) $C$ is unbounded in $\gamma$, i.e., $\sup C = \gamma$. A set $X$ is said to be stationary in $\gamma$ if for all clubs $C$, $X \cap C \ne \emptyset$.

From its proof, the following result holds for all rings.

\begin{lemma}\cite[Lemma 9]{EFS90}\label{filt}
Let $M$ be a $\kappa$-generated $R$-module (where $\kappa$ is a regular uncountable cardinal) and suppose
\begin{equation}\label{(1)}
0 = M_0<M_1<\cdots <M_{\nu}<\cdots \ (\nu < \kappa)
\end{equation}
is a well-ordered continuous ascending chain of submodules such that

$(i)$ $M_\alpha$ is less than $\kappa$-generated for each $\alpha < \kappa$;

$(ii)$ $\pd_R (M_{\alpha+1}/M_{\alpha}) \leq 1$ for each $\alpha < \kappa$;

$(iii)$ $\bigcup\limits_{\alpha < \kappa} M_{\alpha} = M.$

For each $\alpha < \kappa$, let $G_{\alpha}$ be an $R$-module, and set $H_{\beta} := \bigoplus\limits_{\alpha < \beta} G_{\alpha}$ for $\beta < \kappa$, and $H := \bigoplus\limits_{\alpha < \kappa} G_{\alpha}$. If the set
	$$E :=\{\alpha<\kappa \mid \exists\beta>\alpha\ \mbox{with}\  \Ext_R^1(M_{\beta}/M_{\alpha},H_{\beta}/H_{\alpha})\not=0\}$$
is stationary in $\kappa$, then $\Ext_R^1(M, H) \ne 0$.
\end{lemma}

Recall from \cite{F23} that a ring $R$ is called subperfect if its total quotient ring $\T(R)$ is a perfect ring. Certainly, every $\tau_q$-semisimple ring is subperfect. It was proved in \cite[Chapter VI, Proposition 5.1]{FS01} that every module of projective dimension $\leq 1$ over an integral domain has a tight system. Recently, Fuchs showed that it also holds over subperfect rings.

\begin{lemma}\label{tight-sys}
\cite[Theorem 3.1]{F23} Let $R$ be a subperfect ring and $B$ be an $R$-module of projective dimension $\leq 1$. Then $B$ has a tight system, i.e., a collection $\mathscr{T}$ of submodules of $B$ such that
\begin{enumerate}
\item $0, B \in \mathscr{T}$;
\item $\mathscr{T}$ is closed under unions of chains;
\item if $B_i < B_j$ in $\mathscr{T}$, then $\pd_R (B_j/B_i) \leq 1$;
\item if $B_i \in \mathscr{T}$ and $C$ is a countable subset of $B$, then there is a $B_j \in \mathscr{T}$ such that $\langle B_i, C \rangle \subseteq B_j$ and $B_j/B_i$ is countably generated.
\end{enumerate}
\end{lemma}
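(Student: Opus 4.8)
The plan is to realize tight submodules of $B$ concretely inside a free presentation and then to assemble them by a closure argument of Hill type. First I would fix a free presentation $0\to K\xrightarrow{\iota} F\xrightarrow{\pi} B\to 0$ with $F$ free; since $\pd_R B\le 1$, the kernel $K$ is projective. For a submodule $N\le B$ write $F_N:=\pi^{-1}(N)$, so that $0\to F_N\to F\to B/N\to 0$ is exact with $F$ free; hence $\pd_R(B/N)\le 1$ if and only if $F_N$ is projective. The useful consequence is that condition (3) becomes essentially automatic: if $N_1\le N_2$ both have projective preimage, then (because both contain $K$) $N_2/N_1\cong F_{N_2}/F_{N_1}$ sits in $0\to F_{N_1}\to F_{N_2}\to N_2/N_1\to 0$, a projective resolution of length one, so $\pd_R(N_2/N_1)\le 1$. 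Thus it suffices to produce a family $\mathscr{T}$ of submodules of $B$, each with projective preimage, containing $0$ and $B$, closed under unions of chains, and satisfying the countable closure property (4).

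The heart of the argument, and the step I expect to be the main obstacle, is a relative countable enlargement lemma: given a submodule $N$ with $\pd_R(B/N)\le 1$ and a countable subset $C\subseteq B$, one must find $N'\supseteq\langle N,C\rangle$ with $\pd_R(B/N')\le 1$ and $N'/N$ countably generated. Here the subperfect hypothesis is essential. Lifting $C$ to $F$ and enlarging $F_N$ by these lifts produces a submodule whose preimage need not be projective, the obstruction being concentrated in a divisible part that is naturally a $\T(R)$-module. Because $R$ is subperfect, $\T(R)$ is perfect, so flat $\T(R)$-modules are projective and projective covers exist; this is exactly what lets one run a back-and-forth closure that repairs projectivity of the preimage after only countably many steps while keeping $N'/N$ countably generated. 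Iterating this enlargement from $0$ and exhausting a generating set of $B$ then yields a continuous well-ordered filtration $0=B_0\subset B_1\subset\cdots\subset B_\sigma=B$ whose consecutive quotients $B_{\alpha+1}/B_\alpha$ are countably generated and of projective dimension $\le 1$.

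Finally I would globalize via Hill's Lemma applied to this filtration. It furnishes a family $\mathscr{T}$ of submodules of $B$ containing $0$, $B$ and every $B_\alpha$, closed under arbitrary sums and intersections (in particular under unions of chains), such that for $B_i<B_j$ in $\mathscr{T}$ the quotient $B_j/B_i$ is again filtered by the modules $B_{\alpha+1}/B_\alpha$, and such that any member of $\mathscr{T}$ together with a countable subset of $B$ is contained in a member of $\mathscr{T}$ with countably generated relative quotient. By the transfinite extension (Auslander) property of the class of modules of projective dimension $\le 1$, each such $B_j/B_i$ and each $B/N$ with $N\in\mathscr{T}$ has $\pd\le 1$; in particular every $F_N$ is projective, so the preimage reduction of the first paragraph yields condition (3), closure under unions of chains gives (2), and the last Hill property is precisely condition (4). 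Hence $\mathscr{T}$ is a tight system for $B$. The one delicate input throughout is the subperfectness used in the enlargement lemma, since without control of the divisible ($\T(R)$-module) part one cannot guarantee that the tight steps stay countably generated.
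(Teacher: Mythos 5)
The paper does not actually prove this lemma: it is quoted verbatim from \cite[Theorem 3.1]{F23}, so your attempt has to be measured against Fuchs's published argument rather than anything in this note. The routine parts of your proposal are fine: the observation that $\pd_R(B/N)\le 1$ if and only if the preimage $F_N=\pi^{-1}(N)$ is projective, the resulting automatic verification of condition (3), and the globalization of an $\aleph_1$-filtration to a full tight system via Hill's Lemma are all standard and valid over any ring. But that means the entire content of the theorem sits in the step you yourself call ``the heart'': the countable enlargement lemma, equivalently the claim that every module of projective dimension $\le 1$ over a subperfect ring is the union of a continuous chain with countably generated factors of projective dimension $\le 1$. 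You do not prove this step; you only assert that subperfectness ``is exactly what lets one run a back-and-forth closure,'' with no mechanism supplied for how projective covers over $\T(R)$ repair projectivity of a submodule of a free $R$-module, why countably many steps suffice, or why the result stays countably generated.

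Moreover, the mechanism you gesture at cannot be right as stated. The genuine difficulty is this: writing $0\to H\to F\xrightarrow{\pi} B\to 0$ with $H$ projective and $H=\bigoplus_j H_j$ a Kaplansky decomposition into countably generated summands, the natural back-and-forth between the basis of $F$ and the summands $H_j$ breaks down because, over a non-Noetherian ring, $H\cap F_0$ need not be countably generated for a countably generated $F_0\le F$, so a countable subset of $B$ can ``touch'' uncountably many $H_j$. Your proposed repair locates the obstruction ``in a divisible part that is naturally a $\T(R)$-module''; but a domain is subperfect ($\T(R)$ is a field, hence perfect), and over a domain every submodule of a free module is torsion-free with zero divisible part, so your claimed obstruction vanishes identically while the enlargement lemma remains the whole difficulty --- it is precisely the hard content of \cite[Chapter VI, Proposition 5.1]{FS01} and of the tight-system arguments in \cite{EFS90}. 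Hence the subperfect hypothesis must be used in some other, concrete way (as it is in Fuchs's proof), and your proposal never identifies it: the decisive step is a genuine gap.
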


The authors show that the following result holds for integral domains (see \cite[Theorem A]{EFS90}). We can prove it under $\tau_q$-semisimple rings along their proof.

\begin{theorem}\label{Baer-filt}
Let $R$ be a $\tau_q$-semisimple ring. Then an $R$-module $B$ is a Baer module if and only if there exists a well-ordered continuous ascending chain of submodules
$$0 = B_0<B_1<\cdots <B_{\nu}<\cdots <B_{\tau} = B,\ (\nu < \tau)$$
for some ordinal $\tau$ such that, for every $\nu < \tau$, $B_{\nu+1}/B_{\nu}$ is a countably generated Baer module.
\end{theorem}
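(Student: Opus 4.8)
The plan is to transcribe the Eklof--Fuchs--Shelah proof of \cite[Theorem A]{EFS90} into the $\tau_q$-semisimple setting, using three inputs already in place. First, Proposition~\ref{baerpd1} guarantees $\pd_R B\le 1$, so the structural machinery for modules of projective dimension $\le 1$ applies. Second, since a $\tau_q$-semisimple ring is subperfect, Lemma~\ref{tight-sys} furnishes $B$ with a tight system. Third, and crucially, Lemma~\ref{1bapd1} and Proposition~\ref{2bapd1} reduce Baerness of a module of projective dimension $\le 1$ to the vanishing of $\Ext_R^1(-,T)$ for $T$ a direct sum of copies of $T_g:=\bigoplus_{r\in\Reg(R)}(R/Rr)$; this is what will let me feed \emph{torsion} test modules into Lemma~\ref{filt}.

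For the direction ($\Leftarrow$), I would fix an arbitrary torsion module $T$ and apply Eklof's lemma to the given continuous chain $0=B_0<\cdots<B_\tau=B$: each successor quotient $B_{\nu+1}/B_\nu$ is Baer, so $\Ext_R^1(B_{\nu+1}/B_\nu,T)=0$ for every $\nu$, whence $\Ext_R^1(B,T)=0$. Letting $T$ range over all torsion modules shows $B$ is Baer.

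For the direction ($\Rightarrow$), I would induct on $\kappa$, the minimal number of generators of the Baer module $B$. If $\kappa\le\aleph_0$ the chain $0<B$ already works. If $\kappa$ is regular and uncountable, I would first use the tight system to build a continuous filtration $(B_\alpha)_{\alpha<\kappa}$ of $B$ with each $B_\alpha$ less than $\kappa$-generated and each $B_{\alpha+1}/B_\alpha$ countably generated of projective dimension $\le 1$, so that hypotheses (i)--(iii) of Lemma~\ref{filt} hold; clause (4) of the tight system is exactly what secures the countably generated successor quotients. To force these quotients to be Baer, I would choose each $G_\alpha$ to be a sufficiently large direct sum of copies of $T_g$ and set $H=\bigoplus_{\alpha<\kappa}G_\alpha$, a direct sum of copies of $T_g$ and hence torsion. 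Because $B$ is Baer, $\Ext_R^1(B,H)=0$, so by the contrapositive of Lemma~\ref{filt} the set $E$ is \emph{non}-stationary; a club $C$ disjoint from $E$, enumerated increasingly and continuously as $\{c_\nu\}$, then yields $\Ext_R^1(B_\beta/B_\alpha,H_\beta/H_\alpha)=0$ for all $\alpha\in C$ and $\beta\in C$ with $\beta>\alpha$. Since $H_\beta/H_\alpha$ contains a copy of the universal test module $\bigoplus_{\lambda}T_g$ for the $\lambda$-generated module $B_\beta/B_\alpha$ (with $\lambda<\kappa$) as a direct summand, Proposition~\ref{2bapd1} shows each $B_{c_{\nu+1}}/B_{c_\nu}$ is Baer. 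Re-indexing along $C$ thus gives a continuous filtration of $B$ with Baer quotients that are less than $\kappa$-generated; applying the induction hypothesis to each quotient and concatenating the resulting chains produces the desired filtration by countably generated Baer modules. The singular case I would reduce to the regular one by writing $B$ as a continuous union of a chain of less than $\kappa$-generated Baer submodules (again via the tight system) and invoking the induction hypothesis on the successive quotients.

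I expect the forward direction's stationary-set step to be the main obstacle. The delicate points are (a) choosing the family $(G_\alpha)$ so that, simultaneously for all relevant $\alpha<\beta$, the module $H_\beta/H_\alpha$ is a large enough sum of copies of $T_g$ to serve as the Baer-testing module of Proposition~\ref{2bapd1} for $B_\beta/B_\alpha$, and (b) matching the generator counts along the club $C$ so that the re-indexed successor quotients remain less than $\kappa$-generated, allowing the induction to close. The singular case is a secondary obstacle, handled by the standard device of filtering by submodules of smaller rank.
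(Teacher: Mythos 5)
Your treatment of sufficiency (Eklof's lemma) and of the regular-cardinal case reproduces the paper's argument faithfully: the tight system from Lemma~\ref{tight-sys}, the choice of each $G_\alpha$ as a direct sum of copies of $\bigoplus_{r\in\Reg(R)}R/Rr$ sized by a generating set of the corresponding successor quotient, non-stationarity of $E$ deduced from $\Ext_R^1(B,H)=0$ via the contrapositive of Lemma~\ref{filt}, re-indexing along a club, Baerness of the new quotients via Lemma~\ref{1bapd1} and Proposition~\ref{2bapd1}, and closing the induction by inserting filtrations of the less-than-$\kappa$-generated Baer quotients. Up to that point your proposal and the paper's proof coincide.

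The genuine gap is the singular case. You propose to ``write $B$ as a continuous union of a chain of less than $\kappa$-generated Baer submodules (via the tight system) and invoke the induction hypothesis on the successive quotients.'' This step fails: tight submodules of a Baer module are Baer, so the chain members can indeed be taken Baer, but the successive quotients $B_{i+1}/B_i$ are only known to have projective dimension $\leq 1$; for such a quotient to be Baer one needs every homomorphism $B_i\rightarrow T$ ($T$ torsion) to extend to $B_{i+1}$, which is precisely the obstruction that the club argument kills in the regular case --- and Lemma~\ref{filt} is unavailable here, since it requires $\kappa$ regular and uncountable. Nor can you instead apply the induction hypothesis to each $B_i$ separately, because the resulting filtrations of the individual $B_i$ need not be compatible as $i$ varies; aligning them is exactly the hard part. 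What is actually needed, and what the paper invokes by deferring to \cite[Proof of Theorem A, Case 2]{EFS90}, is a Shelah-type singular compactness argument applied to the family $\mathscr{N}$ of all countably generated Baer modules; moreover, for that machinery to run over a $\tau_q$-semisimple ring one needs the additional input, supplied by Lemma~\ref{gencard}, that every countably generated Baer module is countably \emph{presented}. Neither the compactness argument nor this countable-presentability ingredient appears in your sketch, so the singular case as you describe it does not close.
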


\begin{proof}
Sufficiency holds by the Auslander-Eklof lemma (see \cite[Lemma 6.2]{gt}).

For necessity, we apply induction on $\kappa$ to the cardinality of a set of generators of the Baer module $B$. If $\kappa \leq \aleph_0$, then there is nothing to prove. Now let $\kappa \geq \aleph_1$, and assume that the claim holds for Baer modules with fewer than $\kappa$ generators.

\textbf{Case 1}: $\kappa$ is a regular cardinal. Since $B$ is a Baer module, $\pd_R B \leq 1$ by Proposition \ref{baerpd1}. Hence $B$ has a tight system by Lemma \ref{tight-sys}. Set $B = M$, and there exists a sequence \ref{(1)} which satisfies the conditions (i)-(iii) in Lemma \ref{filt}. To apply Lemma \ref{filt}, choose $G_{\alpha}$ to be the direct sum of as many copies of $\bigoplus\limits_{r \in \Reg(R)} (R/Rr)$ as the minimum cardinality of the generating system of $M_{\alpha+1}/M_{\alpha}$. If $B$ is a Baer module, then the set $E$ defined in Lemma \ref{filt} is not stationary in $\kappa$; thus, there exists a club $C$ in $\kappa$ which fails to intersect $E$. By keeping only the $M_{\alpha}$ with $\alpha \in C$ and renaming the elements of $C$ by the ordinal $<\kappa$, we obtain a chain \ref{(1)} such that $\Ext_R^1(M_{\alpha+1}/M_{\alpha}, H_{\alpha+1}/H_{\alpha}) = 0$ for all $\alpha < \kappa$. Because of the choice of $G_{\alpha}$, $M_{\alpha+1}/M_{\alpha}$ is a Baer module for every $\alpha < \kappa$ by Lemma \ref{1bapd1} and Proposition \ref{2bapd1}. Since $M_{\alpha+1}/M_{\alpha}$ is less than $\kappa$-generated, the induction applies, and we can insert between $M_{\alpha}$ and $M_{\alpha+1}$ a continuous well-ordered ascending chain of submodules with countably generated factors which are all Baer modules. When this is done for all $\alpha < \kappa$, we finally get the chain we want.

\textbf{Case 2}: $\kappa$ is a singular cardinal. Let $\mathscr{N}$ be the set of all countably generated Baer $R$-modules. Since $R$ is a $\tau_q$-semisimple ring, every countably generated Baer $R$-module is countably presented by Lemma \ref{gencard}. Since it holds for arbitrary rings, the rest is the same as in \cite[Proof of Theorem A, Case 2]{EFS90}.
\end{proof}

Now, we are ready to prove our main result.

\begin{theorem}\label{main}
Let $R$ be a $\tau_q$-semisimple ring. Then every Baer $R$-module is projective.
\end{theorem}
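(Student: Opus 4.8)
The plan is to combine the filtration Theorem~\ref{Baer-filt} with the single genuinely new input, namely that every \emph{countably generated} Baer module is projective, and then to let projectivity ascend along the filtration. First I would reduce to the case where $R$ has no field as a direct summand. Since $R$ is reduced with finitely many minimal primes (Lemma~\ref{0-d}), it has only finitely many idempotents, so it decomposes as a finite product $R\cong F_1\times\cdots\times F_k\times R'$ in which the $F_i$ are its field-type connected components and $R'$ is a $\tau_q$-semisimple ring with no field direct summand. Along the corresponding idempotent decomposition a Baer $R$-module splits as $B\cong M_1\oplus\cdots\oplus M_k\oplus B'$; each $M_i$ is a module over the field $F_i$ and hence projective, while $B'$ is a Baer $R'$-module because every torsion $R$-module has vanishing $F_i$-components (a non-zero-divisor of a field is a unit) and $\Ext^1$ distributes over the product. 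Thus it suffices to treat $B'$, and from now on I assume $R$ has no field direct summand, so that Lemma~\ref{fg0} and Proposition~\ref{fg1} are available.

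Next, by Theorem~\ref{Baer-filt} the module $B$ admits a continuous well-ordered ascending chain $0=B_0<B_1<\cdots<B_\tau=B$ whose consecutive quotients $B_{\nu+1}/B_\nu$ are countably generated Baer modules. Granting for the moment that each such quotient is projective, the short exact sequences $0\to B_\nu\to B_{\nu+1}\to B_{\nu+1}/B_\nu\to 0$ all split, and a routine transfinite induction (using continuity at limit ordinals) identifies $B$ with the direct sum $\bigoplus_{\nu<\tau}B_{\nu+1}/B_\nu$ of projectives; hence $B$ is projective. This is the projective analogue of the Auslander--Eklof lemma invoked for the sufficiency half of Theorem~\ref{Baer-filt}, and it needs no set-theoretic input, the stationary-set machinery of Lemma~\ref{filt} having been spent already in building the filtration.

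The crux is therefore to show that a countably generated Baer module $C$ is projective. By Proposition~\ref{baerflat} it is flat, by Proposition~\ref{baerpd1} $\pd_R C\leq 1$, and resolving $0\to K\to F\to C\to 0$ with $F$ a countably generated free module makes $K$ projective and, by Lemma~\ref{gencard}, at most countably generated; thus $C$ is a countably presented flat module, i.e. a countable direct limit of finitely generated free modules. Since it is classical that a countably presented flat Mittag--Leffler module is projective, it suffices to verify that $C$ is Mittag--Leffler. To do this I would test flatness against the torsion modules $R/rR$ with $r\in\Reg(R)$: Lemma~\ref{1bapd1} converts the Baer hypothesis into the vanishing of $\Ext_R^1(C,T)$ for all direct sums $T$ of $\bigoplus_{r\in\Reg(R)}R/Rr$, while the pure embedding $\mu\colon R\to\prod_{r\in\Reg(R)}R/rR$ of Proposition~\ref{fg1} lets me compare the canonical map $C\otimes_R\prod_r R/rR\to\prod_r C/rC$ with this $\Ext$-vanishing, reading off the injectivity that is exactly the Mittag--Leffler condition.

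I expect this last step, the passage from the $\Ext$-vanishing Baer condition to the Mittag--Leffler condition for the countably generated module $C$, to be the main obstacle: it is the $\tau_q$-semisimple counterpart of the $\Ext$-to-Mittag--Leffler dictionary that resolved Kaplansky's conjecture over domains, and it is precisely here that the finite-product-of-fields structure of $\T(R)$ (Lemmas~\ref{0-d} and~\ref{fiTR}) and the purity of $\mu$ are needed, to guarantee that the torsion test modules $R/rR$ detect enough to force projectivity.
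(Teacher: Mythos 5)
Your skeleton coincides with the paper's own proof: reduce to a $\tau_q$-semisimple ring with no field direct summand, use Theorem~\ref{Baer-filt} together with the fact that a module filtered by projectives is projective to reduce everything to countably generated Baer modules, then show such a module $C$ is a countably presented flat direct limit of finitely generated free modules $F_n$ (via Propositions~\ref{baerflat}, \ref{baerpd1} and Lemma~\ref{gencard}) and conclude projectivity through a Mittag--Leffler argument. All of these reduction steps are correct and are exactly what the paper does (the paper performs the no-field-summand reduction in the middle of the argument rather than at the outset, but the content is identical).

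The genuine gap is the step you yourself flag as ``the main obstacle'': the passage from the Baer ($\Ext$-vanishing) hypothesis to the Mittag--Leffler condition is never carried out, and the sketch you give of it is not correct as stated. Injectivity of the single canonical map $C\otimes_R\prod_{r\in\Reg(R)}R/rR\to\prod_{r\in\Reg(R)}C/rC$ is \emph{not} ``exactly the Mittag--Leffler condition'': being Mittag--Leffler requires injectivity of $C\otimes_R\prod_i Q_i\to\prod_i(C\otimes_R Q_i)$ for \emph{every} family $(Q_i)$ of modules, and testing one particular family yields only a relative version of the property. What the paper actually does at this point is chain together three results of Angeleri H\"{u}gel--Bazzoni--Herbera: setting $M=\bigoplus_{r\in\Reg(R)}R/Rr$, the vanishing $\Ext_R^1(C,M^{(\mathbb{N})})=0$ (immediate from the Baer hypothesis, since $M^{(\mathbb{N})}$ is torsion; Lemma~\ref{1bapd1} is not needed for this direction) is equivalent by \cite[Proposition 2.5]{HBH08} to the Mittag--Leffler property of the inverse system $\big(\Hom_R(F_n,M)\big)_n$; by \cite[Corollary 2.6]{HBH08} this transfers from the direct sum $M$ to the product $\prod_{r\in\Reg(R)}R/rR$; by \cite[Corollary 2.9]{HBH08} and the purity of $R$ in $\prod_{r\in\Reg(R)}R/rR$ (Proposition~\ref{fg1}) it descends to the system $\big(\Hom_R(F_n,R)\big)_n$; and for a countably presented flat module the Mittag--Leffler property of this last system gives projectivity, again by \cite[Proposition 2.5]{HBH08}. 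Without these transfer results (or proofs of them), your argument stops exactly where the real work begins, so as written the proposal is an accurate plan of the paper's proof rather than a complete proof of the theorem.
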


\begin{proof}
Note that every Baer module is a well-ordered continuous ascending chain $(B_{\alpha} \mid \alpha < \kappa)$ of submodules such that every factor $B_{\alpha+1}/B_{\alpha}$ is a countably generated Baer module by Theorem \ref{Baer-filt}. So it suffices to prove that every countably generated Baer module is projective by the Auslander-Eklof lemma.

Let $B$ be a countably generated Baer module. Then $B$ is flat and of projective dimension at most $1$ by Proposition \ref{baerflat} and Proposition \ref{baerpd1}. Note that $B$ is countably presented by Lemma \ref{gencard}. So, by \cite[Proposition 3.1]{HBH08}, there is a countable direct system of finitely generated free modules
$$F_1\xrightarrow{f_1}F_2\xrightarrow{f_2}F_3\rightarrow\cdots\rightarrow F_n\xrightarrow{f_n} F_{n+1}\rightarrow\cdots$$
such that $\lim\limits_{\longrightarrow} F_n \cong B$.

Set $M := \bigoplus\limits_{r \in \Reg(R)} R/Rr$. Then $M$ and $M^{(\mathbb{N})}$ are torsion modules, and so $\Ext_R^1(B, M^{(\mathbb{N})}) = 0$. It follows by \cite[Proposition 2.5]{HBH08} that the tower
$$\big(\Hom_R(F_n,M),\Hom_R(f_n, M)\big)_{n\in\mathbb{N}}$$
satisfies the Mittag-Leffler condition. By \cite[Corollary 2.6]{HBH08}, the tower
$$\big(\Hom_R(F_n,\prod\limits_{r\in\Reg(R)}R/Rr),\Hom_R(f_n, \prod\limits_{r\in\Reg(R)}R/Rr)\big)_{n\in\mathbb{N}}$$
also satisfies the Mittag-Leffler condition.

Note that every $\tau_q$-semisimple ring can be written as a direct sum of a semisimple ring and a $\tau_q$-semisimple ring that does not contain a field as a direct summand. Every Baer module over a semisimple ring is certainly projective, so we can assume that $R$ does not contain a field as a direct summand. Thus, $R$ is a pure submodule of $\prod\limits_{r \in \Reg(R)} R/Rr$ by Proposition \ref{fg1}. Applying \cite[Corollary 2.9]{HBH08}, we conclude that the tower
$$\big(\Hom_R(F_n,R),\Hom_R(f_n, R)\big)_{n\in\mathbb{N}}$$
satisfies the Mittag-Leffler condition. Thus, by \cite[Proposition 2.5]{HBH08}, we have that $B$ is projective.
\end{proof}

\begin{remark}
{\rm Note that not every Baer module is projective in general. Indeed, let $R$ be a total ring of quotients but not semisimple. Then every $R$-module is a Baer module. So there are Baer modules which are not projective. In general, we propose the following conjecture:

\medskip

\noindent\textbf{Conjecture:} A ring $R$ is a $\tau_q$-semisimple ring if and only if every Baer $R$-module is projective.
}
\end{remark}

\medskip

\noindent{\bf Added in the Proof}

We became aware that the concepts of $\tau_q$-semisimple rings and semiprime Goldie rings are indeed equivalent. Specifically, \cite[Theorem 14.39]{gt} states, ``If $R$ is a semiprime Goldie ring, then every Baer $R$-module is projective." Prior to this, we had assumed these notions to be distinct. Although our proof diverges fundamentally from the aforementioned proof, we acknowledge the overlap in the underlying concepts. Our approach, while distinct, does leverage the methodology utilized in the proof concerning integral domains. Specifically, our proof for $\tau_q$-semisimple rings was developed independently and utilizes a different framework, focusing on the extension of Baer modules' properties within this ring class.

\begin{acknowledgement}
The second author was supported by the Basic Science Research Program through the National Research Foundation of Korea (NRF), funded by the Ministry of Education (2021R1I1A3047469).
\end{acknowledgement}

\end{document}